\newtheorem{theorem}{Theorem}[section]
\newtheorem{assumption}[theorem]{Assumption}
\newtheorem{case}[theorem]{Case}
\newtheorem{claim}[theorem]{Claim}
\newtheorem{corollary}[theorem]{Corollary}
\newtheorem{definition}[theorem]{Definition}
\newtheorem{lemma}[theorem]{Lemma}
\newtheorem{remark}[theorem]{Remark}
\newenvironment{proof}[1][Proof]{\noindent\textit{#1.} }
{\hfill \rule{0.5em}{0.5em}}
\newcommand{\R}{\mathbb{R}}
\begin{document}

\title{\textbf{Existence and convergence to a propagating terrace
in one-dimensional reaction-diffusion equations}}

\author{Arnaud Ducrot$^{\hbox{ \small{a}}}$, Thomas Giletti$^{\hbox
{ \small{b}}}$, Hiroshi Matano$^{\hbox{ \small{c}}}$ \\
\footnotesize{$^{\hbox{a }}$UMR CNRS 5251, Universit\'e de Bordeaux, 33000 Bordeaux, France}\\
\footnotesize{$^{\hbox{b }}$UMR 6632 LATP, Universit\'e Aix-Marseille,  Facult\'e des Sciences et Techniques, 13397 Marseille, France}\\
\footnotesize{$^{\hbox{c }}$Graduate School of Mathematical
Sciences, University of Tokyo, Komaba, Tokyo 153-8914, Japan}
}

\maketitle

\begin{abstract}
We consider one-dimensional reaction-diffusion equations for a large
class of spatially periodic nonlinearities -- including multistable
ones -- and study the asymptotic behavior
of solutions with Heaviside type initial data. Our analysis reveals
some new dynamics where the profile of the propagation is not
characterized by a single front, but by a layer of several fronts
which we call a terrace. Existence and convergence to such a terrace
is proven by using an intersection number argument, without much
relying on standard linear analysis. Hence, on top of the peculiar
phenomenon of propagation that our work highlights, several
corollaries will follow on the existence and convergence to
pulsating traveling fronts even for highly degenerate
nonlinearities that have not been treated before.

\vspace{0.2in}\noindent \textbf{Key words}.
Multistable reaction-diffusion equation,
periodic environment,
long time behavior,
propagating terrace,
zero-number argument.

\vspace{0.1in}\noindent
\textbf{2010 Mathematical Subject Classification}.\
35K55, 35C07, 35B08, 35B40
\end{abstract}

\section{Introduction}

We consider in this work a Cauchy problem for the following
reaction-diffusion equation in one space dimension:
\begin{equation}\tag{$E$}\label{eqn1}
\partial_t u(t,x)= \partial_{xx} u(t,x)+ f(x,u(t,x)), \ \
\forall (t,x) \in [0,+\infty) \times \R,
\end{equation}
supplemented with the initial condition
\begin{equation}\label{initial}
u(0,x)=u_0(x)\geq 0, \ \ \forall x\in\R.
\end{equation}
Here the function $f \in C^1 (\R^2;\R)$ satisfies the periodicity
condition
\begin{equation}\label{eqn:f}
f(x+L,u) \equiv f(x,u) \ \mbox{ and } f(x,0) \equiv 0,
\end{equation}
for some $L>0$. We will assume, throughout this paper, that there
exists a positive and $L$-periodic stationary solution $p(x)$ of
\eqref{eqn1}:
\begin{equation}\label{P}\tag{$P$}
\left\{
\begin{array}{l}
p''(x)+f(x,p(x))=0, \ \ \forall x\in\R,\vspace{3pt}\\
p(x)>0,\ \ p(x+L)\equiv p(x).
\end{array}\right.
\end{equation}
The function $p$ is also a stationary solution of the following
auxiliary equation, the $L$-periodic counterpart of \eqref{eqn1}:
\begin{equation}\tag{$E_{per}$}\label{eqn1-per}
\left\{
\begin{array}{l}
\partial_t u(t,x)= \partial_{xx} u(t,x)+ f(x,u(t,x)), \ \ \forall
(t,x) \in [0, +\infty) \times \R, \vspace{3pt}\\
u (t, \cdot) \ L\mbox{-periodic for any } t \in \R.
\end{array}
\right.
\end{equation}
It is obvious that any solution of \eqref{eqn1-per} is also a
solution of \eqref{eqn1}. Equation \eqref{eqn1-per} will later
play an important auxiliary role in the analysis of \eqref{eqn1}.\\

Our aim is to investigate the profile of solutions of \eqref{eqn1}
connecting the two stationary states $0$ and $p$. In particular,
we will study the long time behavior of solutions with Heaviside-type
initial data. Roughly speaking, our main results state that the
solution will converge to what we call a \lq\lq propagating
terrace\rq\rq, the meaning of which will be specified later.
This result, in particular, implies the existence of pulsating
traveling waves (or a set of traveling waves) under rather mild
assumptions.\\

We now state our main assumptions. The first one is concerned with
the attractiveness of $p$ with respect to at least one compactly
supported initial data. Our main theorems (Theorems~\ref{heavip2}
and~\ref{heavip2-CV}) will only need this assumption:

\begin{assumption}\label{assumption-p1'}
There exists a solution $u$ of \eqref{eqn1}-\eqref{initial} with
compactly supported initial data $0 \leq u_0 (x) < p(x)$ that
converges locally uniformly to $p$ as $t \to +\infty$.
\end{assumption}

This assumption covers a wide variety of nonlinearities that
include not only such standard ones as monostable, bistable or
combustion nonlinearities, but also much more general and complex
ones. For instance, it even allows an infinite number of
stationary solutions between 0 and $p$. In this paper, we will
show that this rather weak condition is in fact sufficient for
deriving our main results on the convergence to a
\lq\lq propagating terrace\rq\rq.

The next assumption guarantees that our propagating terrace
consists of a single (pulsating) traveling wave.  Thus, under
this additional assumption, our main results imply the existence
of a pulsating traveling wave, as well as the convergence
of solutions to this traveling wave (Theorem~\ref{heavip1}):

\begin{assumption}\label{assumption-p2'}
There exists no $L$-periodic stationary solution $q$ with
$0< q(x)<p(x)$ that is both isolated from below and stable from
below with respect to \eqref{eqn1-per}.
\end{assumption}

Let us clarify the notions introduced in this assumption. A
stationary solution $q$ of \eqref{eqn1-per} is said to be
{\bf isolated from below} (resp. {\bf above}) if there exists
no sequence of other stationary solutions converging to $q$
from below (resp. above). A stationary solution $q$ is said to
be {\bf stable from below} (resp. {\bf above}) with respect
to equation \eqref{eqn1-per} if it is stable in the $L^\infty$
topology under nonpositive (resp. nonnegative) perturbations.
Otherwise, $q$ is called {\bf unstable from below} (resp.
{\bf above}). It is known that, if $q$ is isolated from below,
then it is stable from below if and only
if there exists a solution $u<q$ converging to $q$ as
$t \rightarrow +\infty$, and unstable from below if and only if
there exists an ancient solution (that is, a solution defined
for all sufficiently negative~$t$) $u<q$ converging to $q$ as
$t \rightarrow -\infty$ (see Theorem~8 in~\cite{Matano84}).

Note that this additional assumption holds for a large class of
standard nonlinearities including the following:

\begin{case}[Monostable nonlinearity]
There exists no $L$-periodic stationary solution $q$ satisfying
$0<q(x)<p(x)$ for all $x\in \R$. Furthermore, $0$ is unstable
from above.
\end{case}

\begin{case}[Bistable nonlinearity]
The stationary solution $0$ is stable from above with respect
to \eqref{eqn1-per}, and $p$ is stable from below with respect to
\eqref{eqn1-per}. Furthermore, all other stationary solutions
between 0 and $p$ are unstable.
\end{case}

\begin{case}[Combustion nonlinearity]
There exists a family of $L$-periodic stationary solutions
$(q_\lambda)_{\lambda \in [0,1]}$ that forms a continuum in
$L^\infty(\R)$ and satisfies $0=q_0<q_1<p$.
Furthermore, there exists no stationary solution $q$ satisfying
$q_1  (x) < q(x) < p(x)$ for all $x \in \R$.
\end{case}

A classical example of the bistable nonlinearity is the
Allen-Cahn nonlinearity $u(1-u)(u-a(x))$, where $0<a(x)<1$,
$a(x+L)\equiv a(x)$. An important subclass of the monostable
nonlinearity is the KPP type nonlinearity, in which $0$ is
assumed to be linearly unstable and $f$ is sublinear with respect
to $u$; a typical example being $(R(x)-u)u$, with
$R(x+L)\equiv R(x)>0$.

KPP type equations have been widely studied, even in the periodic
setting, by numerous authors including \cite{BMR08,BMR11,Hamel08,
HamRoq11,Wein02}. While most of those studies rely heavily on
the linear instability of $0$, our approach in the present paper
largely avoid the need for linear analysis, allowing our results
to be applicable even to strongly degenerate situations that
have not been treated before.


We now introduce some notions which will play a fundamental
role in this paper. We begin with the following:

\begin{definition}\label{def:steep}
Let $u_1,\,u_2$ be two entire solutions of \eqref{eqn1}. We say
that $u_1$ is \textbf{steeper than} $u_2$ if for any $t_1$, $t_2$
and $x_1$ in $\R$ such that $u_1 (t_1 ,x_1 ) = u_2 (t_2,x_1)$,
we have either
\[
u_1 (\cdot + t_1,\cdot) \equiv u_2 (\cdot + t_2, \cdot) \ \
\hbox{or} \ \
\partial_x  u_{1} (t_1,x_1) < \partial_x u_{2} (t_2,x_1).
\]
\end{definition}

Here, by an ``entire solution" we mean a solution that is defined
for all $t\in\R$.
The above property implies that the graph of the solution
$u_1$ (at any chosen time moment $t_1$) and that of the solution
$u_2$ (at any chosen time moment $t_2$) can intersect at most
once unless they are identical, and that if they intersect at a
single point, then $u_1-u_2$ is positive on the left-hand side of
the intersection point, while negative on the right-hand side.
Note that, according to this definition, if the ranges of $u_1$
and $u_2$ are disjoint, then $u_1$ and $u_2$ are steeper than
each other, since their graphs never intersect.

\begin{definition}[Pulsating traveling wave]\label{def:puls}
Given two distinct periodic stationary states $p_1$ and $p_2$, by a
\textbf{pulsating traveling wave solution} (or \textbf{pulsating
traveling front}) of \eqref{eqn1} connecting $p_1$ to $p_2$, we
mean any entire solution $u$ satisfying, for some $T >0$,
$$
u(t,x-L)=u(t+T,x),
$$
for any $x\in \R$ and $t \in \R$, along with the asymptotics
$$
u(-\infty,\cdot)=p_1 (\cdot) \; \mbox{ and } \; u(+\infty,\cdot)
=p_2 (\cdot),
$$
where the convergence is understood to hold locally uniformly
in the space variable. The ratio $c:=\frac{L}{T} >0$ is called
the \textbf{average speed} (or simply the~\textbf{speed}) of
this pulsating traveling wave.
\end{definition}

\begin{remark}
One can easily check that, for any $c>0$, $u(t,x)$ is a pulsating
traveling wave connecting $p_1$ to $p_2$ with speed $c$ if and only
if it can be written in the form $u(t,x) = U (x-ct,x)$, where
$U(z,x)$ satisfies
\begin{equation*}
\begin{array}{c}
 U (\cdot,x+L ) \equiv U (\cdot,x), \vspace{3.5pt}\\
 U (+\infty, \cdot) = p_1 (\cdot) \ \mbox{ and }  U (-\infty,\cdot)
 =p_2(\cdot),
\end{array}
\end{equation*}
along with the following equation that is
equivalent to \eqref{eqn1}:
\[
(\partial_x+\partial_z)^2U+cU_z+f(x,U)=0,\ \ \forall (z,x)\in\R^2.
\]
\vspace{-15pt}
\end{remark}

Let us recall some known results on traveling waves from the
literature.  In the case of spatially homogeneous problems,
existence of traveling waves is well studied (see for instance
\cite{BN92} for a review of the area).  More precisely, in the
KPP case, there exists a continuum of admissible speeds
$[c^*, +\infty)$, while in the bistable or combustion cases,
the admissible speed is unique. Stability and convergence to
those traveling waves are also studied extensively. Among
other things, in the one-dimensional KPP case
(or, more generally, the monostable case), Uchiyama~\cite{Uchi78},
Bramson~\cite{Bramson83} and Lau~\cite{Lau85} proved that
solutions of the Cauchy problem with compactly supported initial
data converge to the traveling front with minimal speed as
$t\to\infty$.
  In this case, the solution does not converge to
the traveling wave with an asymptotic phase, but a phase drift of
order~$\ln t$ occurs~\cite{Bramson83}. Similar results hold for
multi-dimensional problems as long as the parameters of the
equation are invariant in the direction of propagation~\cite{MR??}.

In the case of spatially periodic problems, the state of research
is slightly behind, for obvious technical difficulties. Nonetheless,
in the KPP case, the existence of a continuum of admissible speeds
is well established, as in the case of spatially homogeneous
problems.  It is also known that there is a close relation between
the speed of a traveling wave $u(x,t)$ and its decay rate as
$x \rightarrow +\infty$, at least under some assumptions on the
linearized problem around~$0$; the smaller the speed~$c$, the
faster the decay; hence steeper the front profile.
Convergence to those traveling waves was studied in
\cite{BMR08,BMR11,HamRoq11} in a periodic framework. More precisely,
it has been shown that if the initial data has the same exponential
decay as a given traveling wave as $x \rightarrow+\infty$, then
the solution of the Cauchy problem converges to this traveling wave
as $t \rightarrow +\infty$. However, the case of very fast decaying
initial data (for instance, a Heaviside or a compactly supported
function) has been left open up to now in the periodic framework,
although the appearance of some phase drift of order~$\ln t$ has
also been highlighted in~\cite{HNRR}.

\subsection{The notion of terrace}

Let us now come back to the main theme of the present paper ---
a propagating terrace.  As we mentioned earlier, a traveling wave
is a special case of a propagating terrace, but the latter is
a more suitable notion for describing typical frontal behaviors
in equations of multistable nature. The aim of the present
paper is to study properties of propagating terraces in a spatially
periodic setting, thereby generalizing (and improving) some of
the aforementioned results on pulsating traveling waves.

\begin{definition}\label{def:terrace}
A \textbf{propagating terrace} connecting $0$ to $p$ is a pair
of finite sequences
$(p_k)_{0 \leq k \leq N}$ and $(U_k)_{1 \leq k \leq N}$
such that:
\begin{itemize}
\item Each $p_k$ is an $L$-periodic stationary solution of
\eqref{eqn1} satisfying
\[
p=p_0 >p_1 >...>p_N =0.
\]
\item For each $1 \leq k \leq N$, $U_k$ is a pulsating traveling
wave solution of \eqref{eqn1} connecting $p_k$ to $p_{k-1}$.
\item The speed $c_k$ of each $U_k$ satisfies
$0<c_1\leq c_2\leq\cdots\leq c_N$.
\end{itemize}
Furthermore, a propagating terrace
$T=((p_k)_{0 \leq k \leq N},(U_k)_{1 \leq k \leq N})$ connecting
$0$ to $p$ is said to be \textbf{minimal} if it also satisfies
the following:
\begin{itemize}
\item For any propagating terrace
$T'= ((q_k)_{0 \leq k \leq N'},(V_k)_{1 \leq k \leq N'})$
connecting $0$ to $p$, one has that
\[
\{ p_k \mid 0 \leq k \leq N  \}\;  \subset \;
\{ q_k \mid 0 \leq k \leq N' \}.
\]
\item For each $1\leq k\leq N$, the traveling wave $U_k$ is
steeper than any other traveling wave connecting $p_k$ to
$p_{k-1}$.
\end{itemize}
\end{definition}

Roughly speaking, a propagating terrace can be pictured as a
layer of several traveling fronts going at various speeds, the
lower the faster (Figure~\ref{fig:terrace}).
\begin{figure}[h]
\centering
\includegraphics[width=1\textwidth]{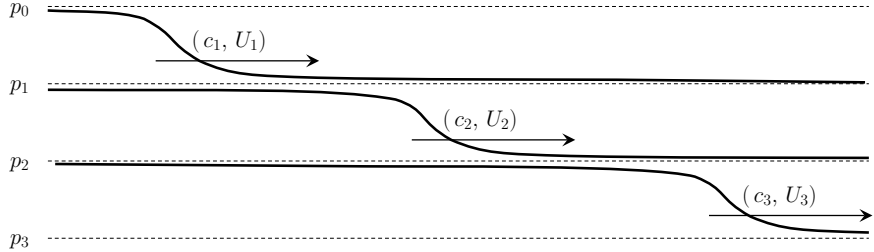}
\caption{A three-step terrace}
\label{fig:terrace}
\end{figure}

The aim of the present paper is to show that the solution of
\eqref{eqn1} with Heaviside-type initial data will converge
to a minimal propagating terrace, as illustrated in
Figure~\ref{fig:terrace-like}.

\begin{figure}[h]
\centering
\includegraphics[width=1\textwidth]{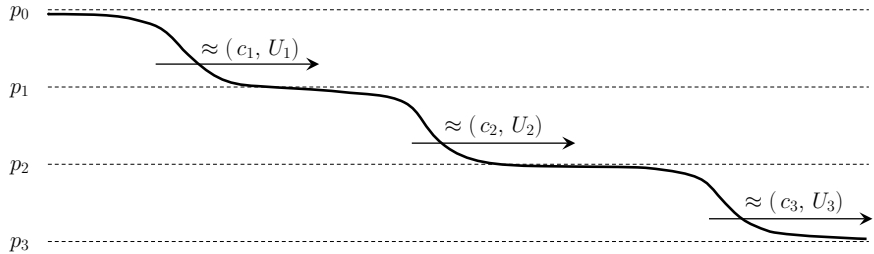}
\caption{Terrace-shaped profile of propagation}
\label{fig:terrace-like}
\end{figure}

In some standard problems such as the KPP and the bistable
equations, the terrace actually consists of a single front
(that is, $N=1$), which means that the solution will eventually
look like a single traveling wave; see Theorem \ref{heavip1}.
However, in more general equations, one cannot expect such simple
dynamics, and this is where the notion of terrace plays a
fundamental role.

The existence of a multi-step terrace has been known in the
spatially homogeneous case (where $f=f(u)$).  Let us give a simple
example.  Consider $f$ as in Figure~\ref{fig:f_terrace} (left),
which is KPP on $[0,\theta_1]$, and bistable on $[\theta_1, 1]$.
The speed of the upper part of the solution is bounded from
above by the speed, say $c$, of the traveling wave for the
bistable nonlinearity $f_{| [\theta_1,1]}$. On the other hand,
the lower part of the solution is pushed from behind by a
spreading front for the KPP nonlinearity $f_{| [0,\theta_1]}$,
whose speed is known to approach $c_*:=2 \sqrt{f'(0)}$.
Therefore, if $c_*>c$,
the upper and lower parts
of the solution necessarily move at two distinct speeds.

\begin{figure}[h!]\label{fig:fterrace}
\vspace{-5mm}
\centering
\includegraphics[width=0.9\textwidth]{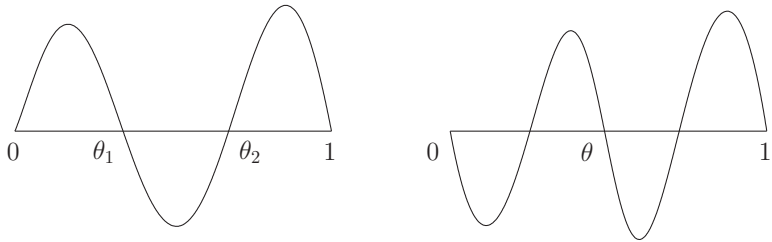}
\caption{Examples of~$f(u)$ that produce a multi-step terrace}
\label{fig:f_terrace}
\end{figure}

Another example was exhibited by Fife and McLeod in~\cite{FifMcL77},
where they considered a specific case of a non-degenerate tristable
nonlinearity, that is, when $f_{|[0,\theta]}$ and $f_{|[\theta,1]}$
are both bistable for some $\theta \in (0,1)$, and that $f'(0)$,
$f'(\theta)$ and $f'(1)$ are all strictly negative; see
Figure~\ref{fig:f_terrace} (right). They showed
that if the speed of the upper bistable part is smaller than the
speed of the lower bistable part, then there does not exist any
single front connecting $0$ to $1$. Furthermore, some solutions
of the Cauchy problem, in particular for Heaviside type initial
data, converge to a combination of those two fronts.
This may be seen as an early study of a propagating terrace for
some very specific examples.

Although the method in \cite{FifMcL77} was expected to hold for
homogeneous nonlinearities composed of a finite number of
bistable parts, it relied strongly on the particular shape of $f$,
and on the non-degeneracy of the equilibria. This means that they
needed some important a priori knowledge on the shape of the
nonlinearity, which we do not need in the present paper.
More importantly, what makes our work different from
those early observations is that we are not simply giving examples
of propagating terraces but are establishing the ubiquity of such
terraces for large classes of reaction nonlinearities, thus
showing that the notion of propagating terrace is fundamental
for studying the dynamics of fronts in general reaction-diffusion
equations.

\subsection{Main results}

We consider solutions of
\eqref{eqn1}-\eqref{initial} whose initial data are given in the form
\begin{equation}\label{eqn:iniheavi}
u_0 (x) = p(x) H(a-x),
\end{equation}
where $a \in \R$ is any constant, and $H$ denotes the Heaviside
function, which is defined by
\[
H(x)=
\left\{
\begin{array}{cr}
0 & \mbox{ if } x<0,\\
1 & \mbox{ if } x\geq 0.\\
\end{array}
\right.
\]
Hereafter, for each $a\in\R$, we denote by $\widehat{u}(t,x;a)$
such solutions. We will prove that $\widehat{u}$ converges in
some sense to a minimal propagating terrace as $t\to\infty$.

\begin{theorem}[Existence of a minimal terrace]
\label{heavip2}
Let Assumption~\ref{assumption-p1'} hold.  Then there exists
a propagating terrace
$((p_k)_{0 \leq k \leq N},(U_k)_{1\leq k\leq N})$
that is minimal in the sense of Definition~\ref{def:terrace}.
Such a minimal propagating terrace is unique, in the sense that
any minimal propagating terrace shares the same~$(p_k)_k$ and that
$U_k$ is unique up to time-shift for each $k$. Moreover,
it satisfies:
\begin{enumerate}[(i)]
\item For any $0 \leq k < N$, the $L$-periodic stationary solution
$p_k$ is isolated and stable from below with respect to
\eqref{eqn1-per}.
\item All the $p_k$ and $U_k$ are steeper than any other entire
solution of \eqref{eqn1}.
\end{enumerate}
\end{theorem}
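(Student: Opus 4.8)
The plan is to read off the entire terrace from the long-time behavior of the Heaviside solution $\widehat{u}(t,x;a)$, using the intersection-number (zero-number) argument as the sole engine, in the spirit announced in the abstract. I would first collect the elementary a priori facts. By the comparison principle, $0\le \widehat{u}(t,\cdot;a)\le p$ for all $t>0$, the map $a\mapsto\widehat{u}(t,\cdot;a)$ is nondecreasing, and Assumption~\ref{assumption-p1'} forces genuine spreading, namely $\widehat{u}(t,\cdot;a)\to p$ locally uniformly as $t\to+\infty$. Standard parabolic estimates then guarantee that any family of translates $\widehat{u}(t_n,\cdot+x_n;a)$ with $t_n\to+\infty$ has a subsequence converging in $C^{1,2}_{loc}$ to an entire solution $w$ of \eqref{eqn1} with $0\le w\le p$.

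The technical heart is a steepness lemma: I would prove that $\widehat{u}(t,\cdot;a)$ is steeper, in the sense of Definition~\ref{def:steep}, than every entire solution $v$ of \eqref{eqn1}. This rests on the fact that the Heaviside profile intersects any admissible profile in the smallest possible number of points, together with the non-increase of the number of sign changes of $\widehat{u}-v$ along the flow. Passing to the limit, steepness is inherited by each entire solution $w$ produced above; this already yields conclusion~(ii) for whatever objects we construct, and forces every such $w$ to be nonincreasing in $x$. A monotone entire solution has well-defined limits $w(t,\pm\infty)$, which are necessarily $L$-periodic stationary solutions of \eqref{eqn1-per}.

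I would then organize these limits. Tracking level sets that advance at a prescribed speed $c$ --- that is, taking $x_n\approx ct_n$ --- the limiting entire solutions split into two types: the stationary ones, which provide the candidate plateaus $p_k$, and the genuinely moving ones, which provide pulsating traveling waves connecting consecutive plateaus and furnish the $U_k$. Each plateau $p_k$ with $k<N$ is approached from below by a translate of $\widehat{u}$, i.e. there is a solution $u<p_k$ with $u\to p_k$; by the characterization recalled after Assumption~\ref{assumption-p2'} (Theorem~8 of \cite{Matano84}) this makes $p_k$ stable from below, and a parallel argument rules out stationary solutions accumulating at $p_k$ from below, giving isolation --- this is conclusion~(i). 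The ordering $0<c_1\le\cdots\le c_N$ and the finiteness of the number of steps come from the same bookkeeping: steeper fronts travel no faster, the spreading hypothesis forces $c_1>0$, and the monotone structure confines the admissible level sets to finitely many groups.

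Finally, minimality and uniqueness follow from steepness. Two entire solutions that are each steeper than the other and share the same pair of limiting states must coincide up to a time-shift; hence each $U_k$ is unique up to shift and steeper than every competing connection between $p_k$ and $p_{k-1}$. For minimality of the plateau set I would show that any terrace connecting $0$ to $p$ must contain every stable-and-isolated-from-below state $p_k$ among its own plateaus, by comparing its fronts with the spreading Heaviside solution and invoking stability from below to exclude a foreign wave crossing the level of $p_k$. I expect the main obstacle to be exactly the organizing step of the previous paragraph: turning the continuum of moving-frame limits into a finite, correctly ordered list of stationary plateaus and traveling connections, and proving finiteness, since this is where the qualitative terrace picture must be made quantitative using only intersection numbers and the spreading hypothesis.
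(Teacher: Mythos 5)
Your opening moves match the paper: the fundamental steepness lemma for $\omega$-limit orbits of $\widehat{u}$ (proved via the zero-number/$SGN$ monotonicity, exactly as in Lemma~\ref{lem:omega-steep}), the spreading bounds, and the closing argument that minimality and uniqueness follow from mutual steepness are all the paper's own. But in between there are two genuine gaps, one of which is an outright error. First, you claim steepness forces every limit $w$ to be nonincreasing in $x$, with limits at $x=\pm\infty$ that are $L$-periodic stationary solutions. In the periodic setting this cannot work: a function monotone in $x$ has \emph{constant} limits as $x\to\pm\infty$, whereas the limit states (e.g.\ $p$ itself) are periodic and generally non-constant, so the limiting profiles are not spatially monotone at all. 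The paper's substitute is monotonicity in \emph{time} (Lemma~\ref{heavi1lemma}, proved by playing $w_\infty$ against its own time-shifts via steepness), and the identification of each limit as either a periodic stationary solution or a pulsating wave is then extracted from the passage times $\tau_k=\tau(x_0,\alpha,a-kL)-\tau(x_0,\alpha,a-(k-1)L)$: a subsequential limit $T>0$ yields the pulsating relation $w_\infty(t+T,x)=w_\infty(t,x-L)$, and the degenerate alternatives ($\tau_k\to 0$ or $\to+\infty$) are excluded or lead to stationarity. You flag this organizing step as the main obstacle but supply no mechanism for it.

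Second, your proof of conclusion~(i) is circular where it is not inapplicable. The characterization from Theorem~8 of~\cite{Matano84} that you invoke (a) presupposes that $p_k$ is \emph{already} isolated from below, and (b) concerns solutions of the periodic problem \eqref{eqn1-per}; translates of $\widehat{u}$, or the wave $U_{k+1}$ which does converge to $p_k$ from below, are not $L$-periodic in $x$, so they do not certify stability from below with respect to \eqref{eqn1-per}. Your ``parallel argument'' for isolation is not an argument. In the paper this is the technical core of the whole theorem (Steps~1--2 of Lemma~\ref{iteration-terrace}): if stationary solutions $q_j$ accumulate at $p_k$ from below, the principal eigenvalue of the linearization satisfies $\mu(0)=0$ and, crucially, $\mu(\lambda)-\mu(0)=O(\lambda^2)$ (a variational formula of Nadin type), which permits a supersolution of the form $\min\{p_k,\,e^{-\lambda(x-ct)}\phi_\lambda+q_j\}$ traveling at a speed $0<c<c_*$ strictly below the spreading speed; Claim~\ref{supersolpk} then shows $\widehat{u}$ must overtake it and reach $p_k$, a contradiction. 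Instability from below is excluded by the same blocking device, using either a finite-interval eigenfunction (when $\mu(0)<0$) or the ancient solution in the unstable set from~\cite{Matano84} (when $\mu(0)=0$). The finiteness of $N$ — which you dismiss as ``bookkeeping'' even though Assumption~\ref{assumption-p1'} allows infinitely many stationary states between $0$ and $p$ — requires yet another run of this eigenvalue/supersolution construction at the decreasing limit $p_\infty$ of the $p_k$. Without these blocking arguments, your proposal establishes neither~(i), nor finiteness, nor therefore the existence of the terrace itself.
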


The existence of a minimal terrace as stated in the above theorem
gives various useful information about the qualitative properties of
the equation. For example, statement~$(ii)$ implies, in particular,
that there exists no traveling wave that intersects any of the~$p_k$.
Note that, in the spatially homogeneous case (namely, $f=f(u)$),
the stability of $p_k$ in statement $(i)$ implies that each $p_k$
is a constant; hence the terrace consists of flat steps.

We now state our convergence result:

\begin{theorem}[Convergence to a minimal terrace]
\label{heavip2-CV}
\sloppy
Let Assumption~\ref{assumption-p1'} hold. Then for any
$a \in \mathbb{R}$, the solution $\widehat{u} (t,x;a)$
converges as $t \rightarrow +\infty$ to the minimal propagating
terrace $((p_k)_{0 \leq k \leq N},(U_k)_{1\leq k\leq N})$ in the
following sense:
\begin{enumerate}[(i)]
\item There exist functions $(m_k (t))_{1 \leq k \leq N}$ with
$m_k (t) = o(t)$ as $t \rightarrow +\infty$ such that
\begin{equation}\label{eqn:CVcor1}
\begin{split}
\widehat{u}(t,x +c_k (t-m_k (t)) ;a)- U_k (t-m_k (t),x +
c_k (t-m_k (t))) \quad \\
\to 0 \ \ \ \hbox{as} \ \ t\to +\infty,
\end{split}
\end{equation}
locally uniformly on $\R$, $c_k$ being the speed of $U_k$.
\item For any $\delta >0$, there exists $C>0$ such that, for any
$1 \leq k \leq N-1$,
$$
\| \widehat{u}(t,\cdot   ;a)- p_k (\cdot) \|_{L^\infty ([c_k
(t-m_k (t))  + C , c_{k+1} (t-m_{k+1} (t)) - C])} \leq \delta
\ \ \hbox{as}\ t\to +\infty,
$$
together with
$$
\| \widehat{u}(t,\cdot +c_1 (t-m_1 (t)) ;a)- p(\cdot) \|_{L^\infty
((-\infty,-C])} \leq \delta \ \ \hbox{as}\ t\to +\infty,
$$
$$
\| \widehat{u}(t,\cdot +c_N (t-m_N (t))  ;a) \|_{L^\infty ([C,
+\infty))} \leq \delta \ \ \hbox{as}\ t\to +\infty.\vspace{3pt}
$$
\end{enumerate}
\end{theorem}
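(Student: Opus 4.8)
The plan is to combine the zero-number (intersection) argument with the existence, uniqueness and steepness of the minimal terrace already established in Theorem~\ref{heavip2}, upgrading the qualitative picture into quantitative convergence in moving frames. First I would record the a priori structure of $\widehat u(t,\cdot;a)$. By the comparison principle and the fact that $0$ and $p$ are stationary, $0 \le \widehat u \le p$ for all $t\ge 0$; since $p(x)H(a'-x) \ge p(x)H(a-x)$ for $a' \ge a$, the solutions are ordered in $a$, and the $L$-periodicity of $f$ and $p$ yields $\widehat u(t,x+L;a)=\widehat u(t,x;a-L)$. The essential input is the zero-number principle: for any entire solution $v$ of \eqref{eqn1}, the number of spatial intersection points of $\widehat u(t,\cdot;a)$ and $v(s,\cdot)$ is nonincreasing in $t$ and drops at every degenerate intersection. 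Because the Heaviside datum is maximally steep, I would show that for large $t$ this intersection number is at most one against each time-translate of the fronts $U_k$ and against each plateau $p_k$; here Theorem~\ref{heavip2}(ii) is decisive, since it guarantees that $U_k$ and $p_k$ are steeper than every entire solution, making the one-intersection property stable in time. This endows $\widehat u(t,\cdot;a)$ with a genuinely terrace-like monotone profile.

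Next I would extract the limiting profiles. Fix $k$, and for a level $\mu$ strictly between the plateaus $p_k$ and $p_{k-1}$ introduce the interface position $X_\mu(t)$, say the leftmost point where $\widehat u(t,\cdot;a)$ meets $\mu$. Using the one-intersection property together with comparison against $U_k$ propagating at its speed $c_k$, I would show $X_\mu(t)=c_k t+o(t)$, define $m_k(t)$ through this sublinear correction, and verify $m_k(t)=o(t)$. Then, in the moving frame at speed $c_k$ recentred by $m_k$, interior parabolic estimates give local convergence of a subsequence of the translates of $\widehat u$ to an entire solution $v$ with range in $[p_k,p_{k-1}]$; the isolation and stability from below of $p_k$ from Theorem~\ref{heavip2}(i) force $v(+\infty,\cdot)=p_{k-1}$ and $v(-\infty,\cdot)=p_k$, while the zero-number bound forces $v$ to be steeper, hence a pulsating traveling wave connecting $p_k$ to $p_{k-1}$. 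By the uniqueness clause of Theorem~\ref{heavip2}, $v$ coincides with $U_k$ up to a time-shift, and a standard subsequence argument upgrades this to full convergence, giving statement~(i). Statement~(ii) then follows: on the widening gap between consecutive interfaces the solution is squeezed toward $p_k$ by stability from below combined with the one-intersection property, while the far field is controlled by comparison with the attracting solution of Assumption~\ref{assumption-p1'} on the left and with $0$ on the right.

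The main obstacle I anticipate is the degenerate regime where consecutive speeds coincide, $c_k=c_{k+1}$. Then the two fronts and the intervening plateau $p_k$ all propagate at the same linear speed, so the plateau cannot be isolated by a linearly growing window and no asymptotic phase is available; the separating gap opens only at a sublinear rate, which is precisely why the phases $m_k(t)$ are permitted to be merely $o(t)$ rather than bounded (the periodic analogue of the $\ln t$ Bramson drift). Controlling this separation without recourse to linear analysis is the crux, and I would handle it by pushing the zero-number argument to show that the steepness ordering by itself forces the two coincident-speed profiles to disentangle, so that each moving frame still sees a single wave in the limit and the $o(t)$ corrections absorb the residual drift.
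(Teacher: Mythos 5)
Your proposal shares the paper's philosophy (steepness via the zero number, limits in moving frames, identification through uniqueness), but your decision to take Theorem~\ref{heavip2} as a black box and prove convergence separately opens a genuine gap at the quantitative core: the existence of the sublinear drifts $m_k$. You assert $X_\mu(t)=c_k t+o(t)$ ``using the one-intersection property together with comparison against $U_k$'', but comparison is unavailable: $\widehat u(t,\cdot;a)$ and the time-translates of $U_k$ are never ordered (they cross), so the maximum principle gives no bound on the motion of the crossing point, and a single intersection point can a priori move at any speed. In the paper this estimate is not obtained by comparison at all; it is an output of the \emph{construction} of the terrace: $U_k$ is built as $w_\infty(\cdot,\cdot;\alpha_k)=\lim_j \widehat u\bigl(\cdot+\tau(x_0,\alpha_k,a-jL),\cdot+jL;a\bigr)$, the identification of this limit as a pulsating wave forces the increments $\tau_j$ of the crossing times to converge to $L/c_k$ (Remark~\ref{CVtaukspeed}), and $m_k$ is defined by interpolating $t_j-jL/c_k$, which is $o(t)$ by Ces\`aro. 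If you discard this information you must recover it, and the only clean way is to do the identification \emph{first}, recentring by the level set rather than by an unproven speed: any subsequential limit $v$ is steeper than every entire solution by Lemma~\ref{lem:omega-steep}, it shares a value with $U_k$ at a common spatial point by the normalization, and two mutually steeper solutions through a common point coincide up to time shift; the speed of the level set then follows a posteriori from the convergence. Your write-up inverts this order, and the intermediate claims you insert are unjustified: nothing a priori confines the range of a subsequential limit to $[p_k,p_{k-1}]$ (there may be infinitely many stationary solutions between $p_k$ and $p_{k-1}$ --- ruling them out is precisely what steepness is for), steepness alone does not make $v$ a traveling wave, and a global level $\mu$ ``strictly between the plateaus'' need not even exist in the periodic setting, since the ranges of the non-constant functions $p_k$ and $p_{k-1}$ can overlap; this is why the paper tracks values at the lattice points $x_0+jL$ instead of a global level set.

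For part (ii), your far-field control is also insufficient, and the missing tool is the one the paper uses: the discrete spatial monotonicity $\widehat u(t,x+L;a)\le \widehat u(t,x;a)$ (Claim~\ref{claim:period-mon}), which you in fact derive in your first paragraph but never deploy. Comparison with the attracting solution of Assumption~\ref{assumption-p1'} (i.e.\ Lemma~\ref{spread}(ii)) gives $\widehat u\to p$ only on $\{x\le ct\}$ for $c<c_*$, and $c_*$ may be strictly smaller than $c_1$, leaving the region between $c_* t$ and $c_1(t-m_1(t))-C$ uncontrolled; the paper instead combines the local convergence of statement (i) on a compact window with the spatial asymptotics of $U_k$ and $U_{k+1}$, and propagates the resulting bounds to infinity by the monotonicity above. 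Finally, the ``crux'' you flag when $c_k=c_{k+1}$ dissolves in the correct argument: identification of each frame's limit uses only steepness plus a common point, never separation of fronts, and part (ii) is vacuous when the window in question is empty --- so no disentangling lemma is needed, which is fortunate, because your proposed resolution of it (``pushing the zero-number argument'') is a hope rather than an argument.
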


Roughly speaking, statements $(i)$ and $(ii)$ of the above theorem
describe, respectively, the ascending part and the stationary part
of the terrace, the latter being flat if $f=f(u)$, as mentioned above.
It should be noted that, under Assumption \ref{assumption-p1'},
there may exist an infinite number of isolated stationary
solutions between $0$ and $p$, but our Theorem \ref{heavip2}
states that only a finite number of layers appear in the
limiting terrace. The solution seems to ignore excessive
complexity of such nonlinearities.

The proofs of Theorems \ref{heavip2} and \ref{heavip2-CV} are,
in a sense, one and the same. In fact, by first showing that the
steepness of the Heaviside type initial data \eqref{eqn:iniheavi}
implies that the limiting profile of the solution is steeper than
any other entire solution, we will then use this fact to prove
the convergence of the
solution to a minimal terrace without assuming the existence
of a terrace. Hence it automatically implies the existence of a
minimal terrace.

In the special case where Assumption~\ref{assumption-p2'} also
holds, the above two theorems reduce to the following result on
pulsating traveling waves.
Thus it gives a new and highly original
proof for the existence of pulsating traveling waves.

\begin{theorem}[Monostable/bistable/combustion cases]
\label{heavip1}
Let Assumptions \ref{assumption-p1'} and~\ref{assumption-p2'} hold.
Then there exists a pulsating traveling wave $U^* (t,x)$ connecting
$0$ to $p$ with speed $c >0$ that is steeper than any other
entire solution between $0$ and $p$.
Furthermore,
for any $a \in \mathbb{R}$, there exists a function $m(t)$ with
$m(t) = o(t)$ as $t\rightarrow +\infty$ such that
\begin{equation}\label{eqn:CVcor0}
\| \widehat{u}(t,\cdot  ;a)- U^* (t -m(t),\cdot)\|_{L^\infty (\R)}
\to 0 \ \ \hbox{as}\ \ t\to +\infty.
\end{equation}
\end{theorem}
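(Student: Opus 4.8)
The plan is to derive Theorem~\ref{heavip1} as a direct corollary of Theorems~\ref{heavip2} and~\ref{heavip2-CV}. The entire strategy rests on showing that, under the additional Assumption~\ref{assumption-p2'}, the minimal terrace produced by Theorem~\ref{heavip2} has exactly $N=1$ step. Once this is established, the sequence $(p_k)_{0\le k\le N}$ reduces to $p_0=p$ and $p_1=0$, there is a single pulsating traveling wave $U_1$ connecting $0$ to $p$, and the convergence statement of Theorem~\ref{heavip2-CV} collapses to \eqref{eqn:CVcor0} with $U^*=U_1$ and $m(t)=m_1(t)$.

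First I would argue by contradiction: suppose $N\ge 2$. Then the minimal terrace contains an intermediate periodic stationary solution $p_1$ with $0<p_1(x)<p(x)$. By statement~$(i)$ of Theorem~\ref{heavip2}, applied with $k=1<N$, the solution $p_1$ is \emph{isolated and stable from below} with respect to \eqref{eqn1-per}. This is precisely the kind of stationary solution whose existence Assumption~\ref{assumption-p2'} forbids. Hence $N\ge 2$ is impossible, which forces $N=1$. The only subtle point to check here is that $k=1$ indeed falls in the range $0\le k<N$ where statement~$(i)$ applies; this holds as soon as $N\ge 2$, which is exactly the case under consideration, so the argument is clean. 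I should also confirm that the boundary index $k=N$ (which would correspond to $p_N=0$) is not needed for the contradiction, so no delicate endpoint analysis is required.

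With $N=1$ in hand, the remaining conclusions follow by specialization. The steepness property---that $U^*$ is steeper than any other entire solution between $0$ and $p$---is inherited verbatim from statement~$(ii)$ of Theorem~\ref{heavip2}, since the unique traveling wave $U_1$ is among the objects asserted to be steeper than all other entire solutions. Positivity of the speed $c>0$ is built into the definition of a propagating terrace (Definition~\ref{def:terrace}), so no separate verification is needed. For the convergence, I would note that with $N=1$ the intermediate estimates in statement~$(ii)$ of Theorem~\ref{heavip2-CV} (those indexed by $1\le k\le N-1$) become vacuous, while the two remaining estimates describe the behavior of $\widehat{u}$ far behind and far ahead of the single front. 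Combining the locally uniform convergence from statement~$(i)$ with these two tail estimates, and using that $U^*(t-m(t),\cdot)$ itself converges to $p$ behind the front and to $0$ ahead of it, yields the uniform convergence on all of $\R$ claimed in \eqref{eqn:CVcor0}.

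The main obstacle I anticipate is the last gluing step: upgrading the \emph{locally uniform} convergence of statement~$(i)$ together with the \emph{one-sided} $L^\infty$ tail bounds of statement~$(ii)$ into a \emph{global} $L^\infty(\R)$ convergence. The two tail estimates control $\widehat{u}$ only up to an arbitrarily small $\delta$, not exactly, so I would need to exploit the fact that the same $\delta$ simultaneously controls the deviation of $U^*(t-m(t),\cdot)$ from the states $p$ and $0$ in the corresponding regions (by the asymptotics $U^*(-\infty,\cdot)=p$, $U^*(+\infty,\cdot)=0$ built into Definition~\ref{def:puls}). Matching the constant $C$ from statement~$(ii)$ against the convergence rate of the traveling-wave profile, and checking that the overlap region between the ``local'' zone and the two ``tail'' zones is handled consistently, is where the real care is required; everything else is a routine consequence of the two main theorems.
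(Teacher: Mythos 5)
Your proposal is correct and takes essentially the same route as the paper: the paper proves Theorems~\ref{heavip2}, \ref{heavip2-CV} and~\ref{heavip1} by a single common construction, and the point where Assumption~\ref{assumption-p2'} enters is exactly the one you identify --- the first intermediate level $p_1$ of the terrace is isolated and stable from below (Lemma~\ref{iteration-terrace}, i.e.\ statement~(i) of Theorem~\ref{heavip2}), which the assumption forbids unless $p_1\equiv 0$, forcing $N=1$. Your final gluing step (locally uniform convergence in the moving frame plus the two $\delta$-tail estimates, matched against the uniform asymptotics of the pulsating wave profile) is also precisely how the paper obtains the uniform convergence~\eqref{eqn:CVcor0} in Section~\ref{sec:periodic}.
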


Let us make some comments on Theorem \ref{heavip1}, which is a
special case of the previous two theorems.  As regards
the existence part, there have been earlier studies of the
existence of pulsating traveling waves for monostable (possibly
degenerate) and combustion cases \cite{Ber02-per}, as well as
for some periodic bistable case \cite{FZ11}.  In contrast to those
earlier results, which are derived by various different methods
depending on the type of nonlinearities, our theorem relies on
a new, unified and rather straightforward proof, thus avoiding
to deal directly with the particular features and difficulties
of each case.

As mentioned earlier, the convergence result \eqref{eqn:CVcor0}
for fast decaying initial data such as our Heaviside type ones was
previously known only in the homogeneous setting.
Indeed, even in the standard KPP case with spatially periodic
coefficients, almost all the convergence results in
the literature are concerned with solutions whose initial data
have roughly the same decay rate as one of the traveling waves near
$x=+\infty$.  Only very recently, there are some works in progress
that are trying to deal with
some fast decaying initial data~\cite{MR??,Nadin??}.

Note that, though our results cover a large class of equations,
they are concerned only with a specific type of inital data
\eqref{eqn:iniheavi}.  However, by analogy with the homogeneous
case (see for instance the proofs in~\cite{Uchi78}), we expect
that similar convergence results hold for more general initial
data, such as compactly supported ones.  This is a topic of
particular relevance from an applied point of view, and we will
give a partial answer to this question in a forthcoming
paper~\cite{DGM2}.

\paragraph{Plan of the paper}
Our paper is organized as follows. In Section~\ref{sec:preliminaries},
we will present some preliminaries. As our proofs largely rely on
the so-called intersection number (or the zero number) argument,
we will first give its precise definition and basic properties.
We will then use this method to prove our fundamental lemma
(Lemma \ref{lem:omega-steep}), which roughly states the following:

\begin{quote}
{\it
{\bf Fundamental lemma:} Any function that appears in the
$\omega$-limit set of~$\widehat{u}$ is steeper than any other
entire solution, where $\widehat u$ is the solution of 
\eqref{eqn1} for the initial data \eqref{eqn:iniheavi}
}
\end{quote}

One immediate consequence of the above fundamental lemma is that 
any two elements of the $\omega$-limit set are steeper than each 
other. This means that they are either identical (up to
time shift) or strictly ordered, that is, one is above or below
the other. This observation is important both for establishing
convergence results and for the construction of a multi-step 
terrace.

Here we note that the definition of the $\omega$-limit set in
this paper is slightly different from the standard one, in that
we consider arbitrary spatial translations while taking the
limit as $t_k\to\infty$; see Definition \ref{def:omegalimit}.
The reason for adopting this slightly non-standard defintion is
that, since each step of the terrace moves at a different speed,
we cannot capture the asymptotic profile of the solution in
a single frame.  A multi-speed observation is unavoidable in the
case of a multi-step terrace.
In the last part of Section~\ref{sec:preliminaries}, we will
prove a lemma on the spreading speed of~$\widehat{u}$, which
will be used repeatedly in later sections but is also of
independent interest in its own right.

In Section~\ref{sec:lemmas}, we will use our fundamental lemma
to prove the convergence of solutions with
Heaviside type initial data to a unique limit around any level
set. The entire solution
thereby constructed possesses, in some sense, some qualitative
properties of traveling fronts, such as monotonicity in time.
The same result could in fact be shown with a similar argument
in a more general setting without the periodicity assumption.

We will then show in Section~\ref{sec:periodic} that this limit
is a pulsating traveling wave
connecting some pair of $L$-periodic stationary solutions
$p_-< p_+$ that lie between $0$ and $p$.  Once again the
above-mentioned fundamental lemma plays a key role in deriving
this result. This leads to construction of a multi-step minimal
terrace inductively, as described in Theorems~\ref{heavip2}
and~\ref{heavip2-CV}.  In the special case where
Assumtion~\ref{assumption-p2'} holds, we have $p_-=0,\,p_+=p$,
thus the terrace is a single traveling wave. 
As the existence of any traveling wave is not a priori assumed,
this leads to both the existence and the convergence results in
Theorem~\ref{heavip1}.


\section{Preliminaries}\label{sec:preliminaries}

\subsection{Zero number}

Our proof of the main results relies strongly on a zero-number
argument. The application of this argument --- or the
``Sturmian principle" --- to the convergence proof in semilinear
parabolic equations first appeared in \cite{Matano78}. But what makes
the present paper different from earlier work is that we employ
the zero-number argument to prove not only the convergence but also
the existence of the target objects, namely the terrace and
pulsating traveling waves.

In this paper, besides the standard zero-number $Z[\,\cdot\, ]$,
we introduce a related notion $SGN[\,\cdot\,]$, which turns out to
be exceedingly useful in establishing our fundamental
lemma.

\begin{definition}\label{def:zeros}
For any real-valued function $w$ on $\R$, we define:
\begin{itemize}
\item $Z \left[ w(\cdot) \right]$ is the number of sign changes of
$w$, namely the supremum over all $k \in \mathbb{N}$ such that
there exist real numbers $x_1 < x_2 <... < x_{k+1}$ with
\[
w(x_i) . w(x_{i+1})< 0 \ \ \hbox{for all}\ \ i=1,2,...,k .
\]
We set $Z[w]=-1$ if $w\equiv 0$.
\item
$SGN\left[w(\cdot)\right]$, which is defined when
$Z\left[w(\cdot)\right]<\infty$, is the word consisting of $+$ and
$-$ that describes the signs of $w(x_1),\ldots,w(x_{k+1})$, where
$x_1<\cdots<x_{k+1}$ is the sequence that appears in the definition
of $Z[w]$ with maximal $k$. We set $SGN\left[0\right] =[\ ]$, the
empty word.
\end{itemize}
\end{definition}

If $w$ is a smooth function having only simple zeros on $\R$,
then $Z\left[w\right]$ coincides with the number of zeros of $w$.
For example,
\[
Z[x^2-1]=2,\ \ SGN[x^2-1]=[+ - +].
\]

By definition, the length of the word $SGN[w]$ is equal to $Z[w]+1$.
It is also clear that $Z[w]=0$ if and only if either
$w\geq 0,\,w\not\equiv 0$ or $w\leq 0,\,w\not\equiv 0$. The former
implies $SGN[w]=[+]$, and the latter $SGN[w]=[-]$.

If $A, B$ are two words consisting of $+$ and $-$, we write
$A\rhd B$ (or, equivalently, $B\lhd A$) if $B$ is a subword of $A$.
For example,
\[
[+\, -] \rhd B\ \ \hbox{for}\ B=[+\,-],\,[+],\,[-],\,[\ ]
\quad\ \hbox{but not}\ \ [+\,-]\rhd [-\,+].
\]

Let us recall some properties of $Z$ and $SGN$:

\begin{lemma}\label{lem:Z1}
Let $w(t,x) \not \equiv 0$ be a bounded solution of a parabolic
equation of the form
\begin{equation}\label{eqw}
\partial_t w = \partial_{xx} w + c (t,x) w \ \ \mbox{ on a domain }
( t_1 , t_2) \times \R ,
\end{equation}
where $c$ is bounded. Then, for each $t \in (t_1,t_2)$, the zeros
of $w(t,\cdot)$ do not accumulate in $\R$. Furthermore,
\begin{enumerate}[$(i)$]
\item $Z\left[ w(t, \cdot) \right]$ and $SGN\left[w(t,\cdot)\right]$
are nonincreasing in $t$, that is, for any $t'>t$,
\[
 Z\left[w(t,\cdot)\right]\geq Z\left[w(t',\cdot)\right],\quad\
SGN\left[w(t,\cdot)\right] \rhd SGN\left[w(t',\cdot)\right];
\]
here the assertion remains true even for $t=t_1$ if $w$ can
be extended to a continuous function on $[t_1 , t_2) \times \R$;
\item if $w(t',x')=\partial_x w(t',x')=0$ for some
$t' \in (t_1 ,t_2)$
and $x'\in\R$, then
\[
Z \left[ w(t,\cdot) \right]-2 \geq Z\left[ w(s,\cdot) \right]\geq 0 \
\ \ \hbox{for any}\ t \in (t_1, t') \ \hbox{and}\ s
\in (t' ,t_2)
\]
whenever $Z \left[ w(t,\cdot) \right] < \infty$.
\end{enumerate}
\end{lemma}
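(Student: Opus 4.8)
The plan is to reduce every assertion to the local structure of the nodal set $\{(t,x):w(t,x)=0\}$ near a zero at a fixed time, which I would obtain from a parabolic blow-up analysis. The starting point is that, since $w$ solves \eqref{eqw} with $c$ bounded, a nontrivial solution cannot vanish to infinite order in $x$ at an interior point: if $w(t_0,\cdot)$ vanished together with all of its $x$-derivatives at some $x_0$, then a unique continuation argument for the operator $\partial_t-\partial_{xx}-c$ would force $w\equiv 0$, contradicting $w\not\equiv 0$. Hence at each $t_0\in(t_1,t_2)$ every zero of $w(t_0,\cdot)$ has a finite vanishing order $m\geq 1$, and the rescaling $w(t_0+\lambda^2 s,\,x_0+\lambda y)/\lambda^m$ converges, as $\lambda\to 0$, to a nonzero caloric (heat) polynomial of degree $m$ in $(y,s)$. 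In particular $w(t_0,x)\sim C(x-x_0)^m$ near $x_0$, so the zeros of $w(t_0,\cdot)$ are isolated and have no accumulation point in $\R$, which is the first assertion.

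For the monotonicity in part $(i)$, I would first note that a simple (transversal) zero persists, by the implicit function theorem, as a smooth curve $x=\xi(t)$ along which $w$ changes sign, so transversal sign changes are created or destroyed only through collisions, and a collision is precisely a point $(t',x')$ at which $w$ vanishes to some order $m\geq 2$. From the limiting heat polynomial of degree $m$ one reads off that, for $t$ slightly below $t'$, $w(t,\cdot)$ has several simple zeros clustered near $x'$, while for $s$ slightly above $t'$ it has strictly fewer, and in particular fewer sign changes there. Since between collision times the sign-change count is locally constant, summing over the (locally finite) collision set shows $Z[w(t,\cdot)]$ is nonincreasing. For the subword property $SGN[w(t,\cdot)]\rhd SGN[w(s,\cdot)]$, I observe that the zeros annihilated in a collision are mutually adjacent, so the deleted signs are interior letters of the word, whereas the signs of $w$ on the two outermost unbounded components are preserved by continuity away from zeros; thus $SGN[w(s,\cdot)]$ is obtained from $SGN[w(t,\cdot)]$ by deleting letters. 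The boundary case $t=t_1$ then follows by continuity once $w$ is assumed to extend continuously to $[t_1,t_2)\times\R$.

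For part $(ii)$, the hypothesis $w(t',x')=\partial_x w(t',x')=0$ means $x'$ is a zero of $w(t',\cdot)$ of order $m\geq 2$, so the same heat-polynomial asymptotics produce at least two sign changes of $w(t,\cdot)$ near $x'$ for $t<t'$ close to $t'$ that are annihilated for $s>t'$ (for instance the model $y^2+2s$ loses two zeros as $s$ crosses $0$). Since $Z[w(t,\cdot)]<\infty$ is assumed, these clustered sign changes are counted among finitely many, and combining this local drop of at least two with the global monotonicity already established yields $Z[w(t,\cdot)]\geq Z[w(s,\cdot)]+2$. The remaining inequality $Z[w(s,\cdot)]\geq 0$ is simply the statement $w(s,\cdot)\not\equiv 0$, which holds because uniqueness for \eqref{eqw} would otherwise propagate $w(s,\cdot)\equiv 0$ to $w\equiv 0$.

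The main obstacle, and the analytic heart of the argument, is the local structure theorem itself: the rigorous blow-up convergence to a nonzero caloric polynomial and the precise bookkeeping of how many sign changes collapse at a degenerate zero, carried out under the sole hypothesis that $c$ is bounded. Because $c$ need not be smooth or analytic, complex-analytic continuation is unavailable and the vanishing-order and unique-continuation statements must be argued purely PDE-theoretically; for this I would invoke Angenent's theorem on the zero set of solutions of linear parabolic equations to supply the discreteness and the local expansion, and reserve the elementary combinatorial bookkeeping (the $SGN$ deletion argument and the sharp drop of at least two) for the explicit reasoning above, all of which becomes routine once the local picture is in hand.
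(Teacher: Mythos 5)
Your proposal has a genuine gap: it is entirely local in space, and it never uses the hypothesis that $w$ is \emph{bounded}. On the unbounded domain $(t_1,t_2)\times\R$ the claim that ``transversal sign changes are created or destroyed only through collisions'' does not follow from the implicit function theorem plus the local (Angenent/caloric-polynomial) structure of the zero set: a new component of $\{w<0\}$ can a priori nucleate \emph{at infinity}, its earliest times being approached only along sequences $x\to\pm\infty$, in which case no finite degenerate zero marks its birth and your collision bookkeeping never sees it. This is not a pedantic worry; it is exactly where boundedness enters. The Tychonov solution of the heat equation (a nontrivial solution on $\R$ with identically vanishing initial data) passes your local analysis verbatim, yet $Z$ jumps from $-1$ to a nonnegative value, with sign changes appearing from infinity; since your argument never distinguishes that example from a bounded $w$, it cannot be complete. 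The missing ingredient is precisely the point the paper makes right after the lemma: Angenent's theorem \cite{An88} is stated on bounded intervals, and to pass to $\R$ one truncates to an interval $[-M,M]$ containing all sign changes at time $t_0$ and applies a Phragm\'en--Lindel\"of-type maximum principle for bounded solutions on the tails $\{x\geq M\}$ and $\{x\leq -M\}$, showing that no sign change can enter from infinity as long as $w(t,\pm M)$ keeps its sign (see \cite{DuMatano10}). With that step added, your collision analysis---which, as you acknowledge, is essentially Angenent's result---does yield both the monotonicity of $Z$ and $SGN$ in $(i)$ and the drop of two in $(ii)$, and then your route coincides with the paper's.

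A secondary, smaller gap: the inequality $Z[w(s,\cdot)]\geq 0$ means $w(s,\cdot)\not\equiv 0$, and ruling out $w(s_0,\cdot)\equiv 0$ for some $s_0$ requires \emph{backward} uniqueness for parabolic equations, so as to propagate $w\equiv 0$ to times $t<s_0$ and contradict $w\not\equiv 0$; the forward uniqueness your one-line justification invokes only gives $w\equiv 0$ for $t>s_0$. Backward uniqueness for bounded solutions with bounded coefficients is true, but it is a theorem (of log-convexity/Agmon--Nirenberg type), not a formality, and it too uses the boundedness hypothesis you left untouched.
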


The second inequality of statement $(ii)$ above implies that,
for any $t\in(t_1,t_2)$, the function $w(x,t)$ does not vanish
entirely on $\R$ unless $w\equiv 0$ on $(t_1,t_2)\times\R$.
Statement $(ii)$ is due to \cite{An88}, where this result
is proved by using similarity variables and
expansion by Hermitian polynomials.
Though \cite{An88} deals with only equations on bounded intervals,
the result can easily be extended to $\R$ by applying the maximun
principle near $x=\pm\infty$; see \cite{DuMatano10}.

The statement $(i)$ for $Z[w]$ follows from $(ii)$, at least when
the domain is a bounded interval, but it can be shown more directly
by a combination of the maximum principle and a topological argument
similar to the Jordan curve theorem (which is a more standard way
to prove this statement). In fact, this direct proof proves the assertion for $SGN[w]$, from which the assertion for $Z[w]$
follows automatically; see, for example, \cite{Matano78} for a
similar argument.

One can also check that $Z$ is semi-continuous with respect to
the pointwise convergence, that is:

\begin{lemma}\label{lem:Z-lim}
Let $(w_n)_{n \in \mathbb{N}}$ be a sequence of functions converging
to $w$ pointwise on $\R$. Then
\[
\ w \equiv 0 \ \mbox{ or } \ Z\left[ w \right] \leq \liminf_{n
\rightarrow \infty} Z\left[ w_n \right],
\]
\[
SGN\left[w\right] \lhd \liminf_{n \rightarrow \infty} SGN [w_n].
\]
\end{lemma}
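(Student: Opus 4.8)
The plan is to exploit a single elementary fact: at any point where a function is nonzero, its sign is preserved under pointwise limits. Consequently every finite sign-change configuration of the limit $w$ is eventually shared by the $w_n$. First dispose of the trivial case $w\equiv 0$: then $SGN[w]=[\,]$, and since the empty word is a subword of every word, both assertions hold automatically (the first through its left alternative). So assume $w\not\equiv 0$.

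For the inequality on $Z$: fix any integer $j$ with $j\le Z[w]$ (taking $j=Z[w]$ when this is finite, and $j$ arbitrary when $Z[w]=\infty$), and choose $x_1<\cdots<x_{j+1}$ with $w(x_i)\,w(x_{i+1})<0$ for all $i$. Each $w(x_i)$ is then nonzero, so by pointwise convergence there is $n_0$ such that $\mathrm{sgn}\,w_n(x_i)=\mathrm{sgn}\,w(x_i)$ for every $i$ and every $n\ge n_0$; hence $w_n(x_i)\,w_n(x_{i+1})<0$ for all $i$, which shows $Z[w_n]\ge j$ for $n\ge n_0$. Letting $n\to\infty$ gives $\liminf_n Z[w_n]\ge j$, and since $j\le Z[w]$ was arbitrary, $\liminf_n Z[w_n]\ge Z[w]$. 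In particular, if $Z[w]=\infty$ there is nothing left to prove, so for the remaining statement we may assume $Z[w]<\infty$, so that $SGN[w]$ is defined.

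The heart of the matter is the statement for $SGN$, which rests on the following combinatorial property of an arbitrary real function $g$ with $Z[g]<\infty$: the alternating word read off along any sign-change sequence of $g$ is a subword of $SGN[g]$. To prove it, introduce the level function $\nu(x)$, defined for $g(x)\ne 0$ as the largest $\ell$ for which there is a chain $u_1<\cdots<u_\ell=x$ with $g(u_i)\,g(u_{i+1})<0$; finiteness of $Z[g]$ makes $\nu$ well defined with values in $\{1,\dots,Z[g]+1\}$. One checks readily that $\nu$ is strictly increasing along any sign-change sequence (appending a sign change lengthens a chain by one), and that $\nu$ alone determines the sign of $g$: if $x<x'$ and $g(x)\,g(x')<0$ then $\nu(x')>\nu(x)$, so two nonzero points with equal level must share the same sign. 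Writing $SGN[g]=[s_1,\dots,s_{K+1}]$ with maximal chain $z_1<\cdots<z_{K+1}$, a short argument using maximality shows $\nu(z_\ell)=\ell$, whence $\mathrm{sgn}\,g(x)=s_{\nu(x)}$ for every nonzero $x$. Thus the signs along any sign-change sequence $y_1<\cdots<y_{m+1}$ are exactly $s_{\nu(y_1)},\dots,s_{\nu(y_{m+1})}$ with $\nu(y_1)<\cdots<\nu(y_{m+1})$ a strictly increasing selection of indices, i.e.\ a subword of $SGN[g]$.

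With the subword property in hand the conclusion is immediate: take a maximal sign-change sequence $x_1<\cdots<x_{Z[w]+1}$ of $w$, whose sign word is by definition $SGN[w]$; as in the $Z$-argument, for $n$ large this same sequence is a sign-change sequence of $w_n$ carrying the identical signs, so applying the subword property to $g=w_n$ yields $SGN[w]\lhd SGN[w_n]$ for all large $n$. Passing to a subsequence realizing $\liminf_n Z[w_n]$ (finite, in the case we are in) along which $SGN[w_n]$ is constant — possible since only finitely many words of each bounded length exist, and this constant value is what $\liminf_n SGN[w_n]$ denotes — gives $SGN[w]\lhd\liminf_n SGN[w_n]$, as claimed. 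I expect the subword property to be the only genuine obstacle: the $Z$-inequality is the familiar lower semicontinuity of the zero number, whereas tracking the full sign word forces one to argue, for arbitrary (not necessarily continuous) functions, that a pattern observed on a finite set embeds order-faithfully into the global maximal pattern; the level-function bookkeeping above is the cleanest route I see, and a secondary, purely notational point is to fix the meaning of $\liminf_n SGN[w_n]$, which the subsequence extraction settles.
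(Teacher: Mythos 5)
Your proof is correct, but there is nothing in the paper to compare it with: the paper states this lemma without proof, introducing it only with ``One can also check that $Z$ is semi-continuous with respect to the pointwise convergence,'' so you are supplying an argument the authors left to the reader. The $Z$-inequality part of your argument is the standard folklore one (signs at finitely many points where $w\neq 0$ are eventually inherited by $w_n$). The genuine content you add is the combinatorial ``subword property'': that the sign word read along an \emph{arbitrary} sign-change sequence of a function $g$ with $Z[g]<\infty$ embeds, order-faithfully, as a subword of the \emph{maximal} word $SGN[g]$. This is exactly what makes the $SGN$ half of the statement true and is not completely obvious for arbitrary, possibly discontinuous functions; your level-function bookkeeping handles it correctly ($\nu$ strictly increases along sign-change sequences, equal levels force equal signs, and maximality pins $\nu(z_\ell)=\ell$ on the reference chain, so $\mathrm{sgn}\,g(x)=s_{\nu(x)}$ at every nonzero point). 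As a useful by-product, the same argument shows that $SGN[g]$ is well defined, i.e.\ independent of the choice of maximal chain, a point that Definition~\ref{def:zeros} itself glosses over. Two minor remarks: in the case $Z[w]=0$ you should say explicitly that the single point $x_1$ must be taken with $w(x_1)\neq 0$, since otherwise the sign-propagation step has nothing to propagate; and your reading of $\liminf_n SGN[w_n]$ via a subsequence along which the sign word is constant is not canonical (a priori different subsequences could carry different constant words), but this imprecision is the paper's, not yours --- what your argument actually proves, namely $SGN[w]\lhd SGN[w_n]$ for all sufficiently large $n$ for which $SGN[w_n]$ is defined, is precisely the form in which the lemma is invoked in the proof of Lemma~\ref{lem:Z(u1-u2)}, so nothing is lost.
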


Combining the above two lemmas, we obtain the following lemma:

\begin{lemma}\label{lem:Z(u1-u2)}
Let $u_1$ and $u_2$ be solutions of \eqref{eqn1} such that the
initial data $u_1(0,x)$ is a piecewise continuous bouded function
on $\R$, while $u_2(0,x)$ is bounded and continuous on $\R$. Assume
also that $u_1(0,x)- u_2(0,x)$ changes sign at most finitely many
times on $\R$.  Then
\begin{enumerate}[$(i)$]
\item for any $0\leq t <t'<\infty$,
\begin{equation}\label{Z:t=0}
\begin{split}
Z \left[ u_1 (t,\cdot) - u_2 (t,\cdot)\right] & \geq
Z \left[ u_1 (t',\cdot) - u_2 (t',\cdot)\right],\\
SGN \left[ u_1 (t,\cdot) - u_2 (t,\cdot)\right] & \rhd
SGN \left[ u_1 (t',\cdot) - u_2 (t',\cdot)\right];
\end{split}
\end{equation}
\item if, for some $t'>0$, the graph of $u_1(x,t')$ and that of
$u_2(x,t')$ are tangential at some point in $\R$, and if
$u_1\not\equiv u_2$, then for any
$t,s$ with $0\leq t < t'<s$,
\[
Z \left[ u_1 (t,\cdot) - u_2 (t,\cdot)\right] -2 \geq
Z \left[ u_1 (s,\cdot) - u_2 (s,\cdot)\right]\geq 0.
\]
The same conclusion holds if $u_1, u_2$ are entire solutions of 
\eqref{eqn1}, in which case $t'\in\R$ is arbitrary and 
$-\infty<t<t'<s<\infty$.
\end{enumerate}
\end{lemma}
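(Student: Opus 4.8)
The plan is to set $w := u_1 - u_2$ and reduce everything to Lemma~\ref{lem:Z1}. Since $u_1$ and $u_2$ both solve \eqref{eqn1}, subtracting the two equations and using $f\in C^1$ shows that $w$ satisfies a linear equation of the form \eqref{eqw}, namely $\partial_t w = \partial_{xx} w + c(t,x)\,w$ with
$$
c(t,x)=\int_0^1 \partial_u f\big(x,\, u_2(t,x)+\theta\, w(t,x)\big)\,d\theta .
$$
As $u_1,u_2$ are bounded and $f\in C^1$, the coefficient $c$ is bounded on $(0,\infty)\times\R$, so Lemma~\ref{lem:Z1} applies. By parabolic smoothing each $u_i(t,\cdot)$, hence $w(t,\cdot)$, is a classical bounded solution for $t>0$. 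Thus on the domain $(0,\infty)\times\R$ statement $(i)$ for $0<t<t'$ and statement $(ii)$ for $0<t<t'<s$ follow directly from parts $(i)$ and $(ii)$ of Lemma~\ref{lem:Z1}: the tangency hypothesis translates into $w(t',x')=\partial_x w(t',x')=0$, and $u_1\not\equiv u_2$ gives $w\not\equiv 0$.

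The only genuine difficulty is to include the initial time $t=0$, where $w(0,\cdot)=u_1(0,\cdot)-u_2(0,\cdot)$ is merely piecewise continuous, so the continuous-extension clause of Lemma~\ref{lem:Z1}$(i)$ does not apply verbatim. I would first note that the naive route fails: since $w(t,\cdot)\to w(0,\cdot)$ pointwise at the continuity points as $t\to 0^+$, the semicontinuity of Lemma~\ref{lem:Z-lim} only yields $Z[w(0,\cdot)]\le \liminf_{t\to 0^+}Z[w(t,\cdot)]$, which is the wrong direction. To obtain $Z[w(0,\cdot)]\ge Z[w(t',\cdot)]$ I would instead approximate at a fixed later time.

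Write $n:=Z[w(0,\cdot)]<\infty$ (finite by hypothesis) and choose continuous bounded initial data $u_1^{\varepsilon}(0,\cdot)\to u_1(0,\cdot)$ (pointwise and dominated), obtained by smoothing $u_1(0,\cdot)$ on intervals of width $\varepsilon$ around its finitely many jumps, arranged so that $SGN[u_1^{\varepsilon}(0,\cdot)-u_2(0,\cdot)]=SGN[w(0,\cdot)]$; this is possible precisely because $w(0,\cdot)$ changes sign only finitely often. Let $w^{\varepsilon}:=u_1^{\varepsilon}-u_2$, where $u_1^{\varepsilon}$ is the corresponding solution. Then $w^{\varepsilon}$ extends continuously to $[0,\infty)\times\R$, so Lemma~\ref{lem:Z1}$(i)$ applied with $t=t_1=0$ gives
$$
Z[w^{\varepsilon}(t',\cdot)]\le Z[w^{\varepsilon}(0,\cdot)]=n,
\qquad
SGN[w^{\varepsilon}(t',\cdot)]\lhd SGN[w(0,\cdot)].
$$

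Finally I would let $\varepsilon\to 0$ at the fixed time $t'>0$. Continuous dependence (parabolic estimates) gives $w^{\varepsilon}(t',\cdot)\to w(t',\cdot)$ locally uniformly, hence pointwise, so Lemma~\ref{lem:Z-lim} yields $Z[w(t',\cdot)]\le \liminf_{\varepsilon\to 0}Z[w^{\varepsilon}(t',\cdot)]\le n=Z[w(0,\cdot)]$, the case $w(t',\cdot)\equiv 0$ being trivial. For $SGN$, every $SGN[w^{\varepsilon}(t',\cdot)]$ is one of the finitely many subwords of the fixed word $SGN[w(0,\cdot)]$, so I pass to a subsequence along which it is a constant word $S\lhd SGN[w(0,\cdot)]$ and conclude $SGN[w(t',\cdot)]\lhd S\lhd SGN[w(0,\cdot)]$. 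This proves $(i)$ down to $t=0$; combining it with the interior estimate of $(ii)$ (for $0<\tau<t'<s$ one has $Z[w(0,\cdot)]\ge Z[w(\tau,\cdot)]\ge Z[w(s,\cdot)]+2$) yields $(ii)$ including $t=0$. The entire-solution case is easier, since then $w$ is a classical solution on all of $\R\times\R$ with no initial-data irregularity, and Lemma~\ref{lem:Z1} applies directly for arbitrary $t'\in\R$ and $-\infty<t<t'<s<\infty$. The main obstacle, as indicated, is pushing the monotonicity across $t=0$ in the correct direction, which the fixed-time approximation—rather than a direct passage to the limit $t\to 0^+$—resolves.
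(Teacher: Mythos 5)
Your proof is correct and takes essentially the same route as the paper's: reduce to Lemma~\ref{lem:Z1} via the linear equation satisfied by $w=u_1-u_2$, handle $t=0$ by approximating $u_1(0,\cdot)$ with bounded continuous initial data having the same $SGN$, apply Lemma~\ref{lem:Z1} up to time $0$ for the approximating solutions, and pass to the limit at a fixed later time $t'$ using Lemma~\ref{lem:Z-lim}. Your additional remarks (why the naive $t\to 0^+$ limit goes the wrong way, the subsequence argument for $SGN$, and the explicit use of continuous dependence on initial data) only spell out details the paper leaves implicit.
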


\begin{proof}
The function $w:=u_1-u_2$ satisfies an equation of the form
\eqref{eqw} on $(0,\infty)\times\R$ with
$c(x,t):=(f(x,u_1)-f(x,u_2))/(u_1-u_2)$ being bounded. Thus the
conclusion of the lemma follows from Lemma~\ref{lem:Z1} except
for \eqref{Z:t=0} with $t=0$.  Moreover statement \eqref{Z:t=0}
with $t=0$ also follows from Lemma~\ref{lem:Z1} if $u_1(0,x)$
and $u_2(0,x)$ are both continuous.  In the general
case where $u_1(0,x)$ is only piecewise continuous, we approximate
$u_1$ by a sequence of solutions of \eqref{eqn1}, say $u_{1,n}$,
whose initial data $u_{1,n}(0,x)$ are continuous and satisfy
\begin{itemize}
\item[(a)] $\sup_n\Vert u_{1,n}(0,\cdot)\Vert_{L^\infty(\R)}<\infty$
and $u_{1,n}(0,x)\to u_1(0,x)$ pointwise on $\R$;
\item[(b)] $SGN[u_{1,n}(0,\cdot)-u_2(0,\cdot)]=SGN[u_1(0,\cdot)
-u_2(0,\cdot)]$ \ for $n=1,2,3,\ldots$.
\end{itemize}
Then we have, for each $t'>0$,
\[
SGN \left[ u_1 (0,\cdot) - u_2 (0,\cdot)\right] \rhd
SGN \left[ u_{1,n} (t',\cdot) - u_2 (t',\cdot)\right].
\]
Letting $n\to\infty$ and applying Lemma \ref{lem:Z-lim}, we obtain
the desired conclusion.
\end{proof}
\\

The following corollary will be used repeatedly later:

\begin{corollary}\label{cor:steeper}
Let $v_1,\,v_2$ be two entire solutions of \eqref{eqn1}, and assume
that
\[
SGN\left[v_1(t_1,\cdot)-v_2(t_2,\cdot)\right]\lhd [+\;-]
\quad \hbox{for any}\ \ t_1, t_2 \in\R.
\]
Then $v_1$ is steeper than $v_2$ in the sense of
Definition~\ref{def:steep}.
\end{corollary}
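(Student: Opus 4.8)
The plan is to fix arbitrary $t_1,t_2,x_1\in\R$ with $v_1(t_1,x_1)=v_2(t_2,x_1)$ and to verify the dichotomy of Definition~\ref{def:steep} directly. The essential preliminary move is to reduce to a comparison at a single time level: since \eqref{eqn1} is autonomous in~$t$, the time-shift $\tilde v_2(t,x):=v_2(t+t_2-t_1,x)$ is again an entire solution, and it satisfies $\tilde v_2(t_1,\cdot)=v_2(t_2,\cdot)$. Setting $w(t,x):=v_1(t,x)-\tilde v_2(t,x)$, the hypothesis of the corollary becomes $SGN\!\left[w(t,\cdot)\right]\lhd[+\;-]$ — equivalently $Z\!\left[w(t,\cdot)\right]\leq 1$ — for \emph{every} $t\in\R$, while the tangency assumption $v_1(t_1,x_1)=v_2(t_2,x_1)$ becomes simply $w(t_1,x_1)=0$. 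This recasts the whole statement as a zero-number analysis of the one function $w$ at the single time $t_1$, to which Lemma~\ref{lem:Z(u1-u2)} (in its entire-solution form) applies despite $t_1$ and $t_2$ being distinct.

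First I would dispose of the degenerate alternative. Suppose $w(t_1,\cdot)\equiv 0$, so that the graphs of $v_1$ and $\tilde v_2$ coincide — hence are tangent — at time $t_1$. If we also had $w\not\equiv 0$, then Lemma~\ref{lem:Z(u1-u2)}(ii) would force $Z\!\left[w(s,\cdot)\right]\geq 0$ for every $s>t_1$; but forward uniqueness for the Cauchy problem makes $w(s,\cdot)\equiv 0$ for all such $s$, giving $Z\!\left[w(s,\cdot)\right]=-1$, a contradiction. Therefore $w\equiv 0$ on $\R\times\R$, which unwinds to $v_1(\cdot+t_1,\cdot)\equiv v_2(\cdot+t_2,\cdot)$ — the first alternative of Definition~\ref{def:steep}. (Equivalently one may just invoke the remark following Lemma~\ref{lem:Z1}, that a solution of \eqref{eqw} cannot vanish identically at an interior time without vanishing everywhere.)

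It remains to treat the generic case $w(t_1,\cdot)\not\equiv 0$, where I expect the one genuine subtlety to lie. The key is to exclude a tangential (multiple) zero of $w$ at time $t_1$: if $w(t_1,x')=\partial_x w(t_1,x')=0$ held for some $x'$, then Lemma~\ref{lem:Z(u1-u2)}(ii) would yield $Z\!\left[w(t,\cdot)\right]\geq 2$ for all $t<t_1$, in flat contradiction with the standing bound $Z\!\left[w(t,\cdot)\right]\leq 1$ supplied by the hypothesis. Hence every zero of $w(t_1,\cdot)$ — in particular $x_1$ — is simple and sign-changing. Since each simple zero contributes exactly one sign change and $Z\!\left[w(t_1,\cdot)\right]\leq 1$, the point $x_1$ must be the unique zero and $SGN\!\left[w(t_1,\cdot)\right]=[+\;-]$; that is, $w$ is positive to the left of $x_1$ and negative to its right. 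Reading off the slope at this transversal crossing gives $\partial_x w(t_1,x_1)<0$, i.e.\ $\partial_x v_1(t_1,x_1)<\partial_x v_2(t_2,x_1)$, which is the second alternative of Definition~\ref{def:steep}.

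The heart of the argument — and the only place where more than bookkeeping is needed — is the tension between the uniform bound $Z\!\left[w(t,\cdot)\right]\leq 1$ coming from the hypothesis and the drop of the zero number that Lemma~\ref{lem:Z(u1-u2)}(ii) would attach to either an identically vanishing time slice or a tangential zero. Once this boundedness-versus-monotonicity incompatibility is exploited, both branches of the definition of steepness fall out with no further computation, and the time-shift to $\tilde v_2$ is precisely what makes the already-established zero-number machinery applicable at a single time.
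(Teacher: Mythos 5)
Your proof is correct and follows essentially the same route as the paper's: after the time-shift reduction to a single difference $w$, both arguments play the uniform bound $Z\left[w(t,\cdot)\right]\leq 1$ coming from the hypothesis against the drop of the zero number in Lemma~\ref{lem:Z(u1-u2)}(ii) to exclude tangential zeros, and then read the slope inequality off the unique simple zero together with the sign word $[+\;-]$. The only difference is presentational: you spell out the degenerate case $w(t_1,\cdot)\equiv 0$ explicitly, which the paper subsumes into its dichotomy $v_1(\cdot+t_1,\cdot)\not\equiv v_2(\cdot+t_2,\cdot)$ --- a welcome bit of extra care, but not a different argument.
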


\begin{proof}
Fix $t_1,t_2\in\R$ arbitrarily. From the assumption we see that
\[
Z \left[ v_1 (t+t_1,\cdot) - v_2 (t+t_2,\cdot)\right] \leq 1 \quad
\ \hbox{for all}\ \ t\in \R.
\]
If $v_1(\cdot+t_1,\cdot)\not\equiv v_2(\cdot+t_2,\cdot)$, then by 
Lemma~\ref{lem:Z(u1-u2)} (ii), the function $v_1(t_1,x)-v_2(t_2,x)$ 
has at most one zero on $\R$, and that this zero is simple. Let 
$x_1$ be such a zero; that is, $v_1(t_1,x_1)=v_2(t_2,x_1)$. Then 
the simplicity of this zero and the sign property 
$SGN\left[v_1-v_2\right]\lhd [+\;-]$ imply that 
$\partial_x v_1(t_1,x_1)<\partial_x v_2(t_2,x_1)$. This proves 
that $v_1$ is steeper than $v_2$.
\end{proof}

\subsection{Fundamental lemma on the $\omega$-limit set of
$\widehat{u}$}

The following definition of the $\omega$-limit set of a solution
$u$ is slightly different from the standard one, as we add
arbitrary spatial translations while taking the long-time limit.
The reason for adopting this definition is that, since each step
of the terrace moves at a different speed, we need multi-speed
observations in order to fully capture the asymptotic profile of
the solution.

\begin{definition}\label{def:omegalimit}
Let $u(t,x)$ be any bounded solution of Cauchy problem
\eqref{eqn1}-\eqref{initial}. We call $v(t,x)$ an
\textbf{$\boldsymbol{\omega}$-limit orbit} of $u$ if there exist
two sequences $t_j \rightarrow +\infty$ and $k_j \in \mathbb{Z}$
such that
\[
u(t+t_j,x+k_j L) \rightarrow v(t,x)\ \mbox{ as } j \to +\infty
\; \mbox{ locally uniformly on } \R.
\]
\end{definition}

\begin{remark}
By parabolic estimates, the above convergence takes place in
$C^2$ in $x$ and $C^1$ in $t$.  Hence
one can easily check that  any $\omega$-limit orbit of $u$ is
an entire solution of \eqref{eqn1}. Moreover, if $v(t,x)$ is an
$\omega$-limit orbit of $u$, then so is $v(t+\tau,x+kL)$ for any
$\tau \in \R$ and $k\in \mathbb{Z}$.
\end{remark}

Let us now state a fundamental lemma that will be used repeatedly
throughout our paper:

\begin{lemma}\label{lem:omega-steep}
Let $a \in \R$ and let $v_1$ be any $\omega$-limit orbit of
$\widehat{u} (t,x;a)$. Then $v_1$ is steeper than any entire
solution of \eqref{eqn1} in the sense of Definition \ref{def:steep},
provided that this entire solution lies between $0$ and $p$.
\end{lemma}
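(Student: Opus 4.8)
The plan is to reduce everything to the criterion of Corollary~\ref{cor:steeper}: to show that $v_1$ is steeper than an arbitrary entire solution $v_2$ with $0\le v_2\le p$, it suffices to establish that
\[
SGN\left[v_1(t_1,\cdot)-v_2(t_2,\cdot)\right]\lhd[+\;-]\quad\hbox{for all}\ t_1,t_2\in\R.
\]
So the whole argument is about controlling this one sign word. The engine is the steepness of the Heaviside datum, and the mild novelty is handling the spatial translations $k_jL$ in the definition of an $\omega$-limit orbit.

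First I would prove a sign estimate at the level of $\widehat{u}$ itself. Fix any entire solution $v_2$ with $0\le v_2\le p$ and any $s\in\R$, and set $u_2(t,x):=v_2(t+s,x)$, which is again an entire solution of \eqref{eqn1}. At $t=0$ the difference $\widehat{u}(0,\cdot;a)-u_2(0,\cdot)=p(\cdot)H(a-\cdot)-v_2(s,\cdot)$ is $\ge 0$ on $(-\infty,a]$ (since $v_2\le p$) and $\le 0$ on $(a,+\infty)$ (since $v_2\ge 0$); it therefore changes sign at most once, from $+$ to $-$, so $SGN[\widehat{u}(0,\cdot;a)-u_2(0,\cdot)]\lhd[+\;-]$. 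Since $\widehat{u}$ has piecewise continuous initial data, $u_2$ has smooth initial data, and their difference changes sign finitely often, Lemma~\ref{lem:Z(u1-u2)}$(i)$ applies and the sign word is nonincreasing in~$t$. As $s$ is arbitrary this yields the key estimate: for every $\tau>0$ and every $\sigma\in\R$,
\[
SGN\left[\widehat{u}(\tau,\cdot;a)-v_2(\sigma,\cdot)\right]\lhd[+\;-].
\]

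Next I would transfer this to the $\omega$-limit orbit. Write $v_1(t,x)=\lim_j\widehat{u}(t+t_j,x+k_jL;a)$ with $t_j\to+\infty$ and $k_j\in\mathbb{Z}$. The delicate point is the shift $k_jL$: instead of translating $\widehat{u}$, I absorb the shift into the competitor. Because $f$ and $p$ are $L$-periodic, $\tilde{v}_2^{(j)}(t,x):=v_2(t,x-k_jL)$ is again an entire solution of \eqref{eqn1} with $0\le\tilde{v}_2^{(j)}\le p$. Since a translation of the spatial variable preserves the number and pattern of sign changes, the substitution $y=x+k_jL$ gives
\[
SGN\left[\widehat{u}(t_1+t_j,\cdot+k_jL;a)-v_2(t_2,\cdot)\right]=SGN\left[\widehat{u}(t_1+t_j,\cdot;a)-\tilde{v}_2^{(j)}(t_2,\cdot)\right]\lhd[+\;-],
\]
the last bound being the key estimate applied with $\tau=t_1+t_j>0$ (valid for $j$ large), $\sigma=t_2$, and competitor $\tilde{v}_2^{(j)}$. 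Finally I would pass to the limit: the functions $\widehat{u}(t_1+t_j,\cdot+k_jL;a)-v_2(t_2,\cdot)$ converge pointwise to $v_1(t_1,\cdot)-v_2(t_2,\cdot)$, so the semicontinuity Lemma~\ref{lem:Z-lim} gives $SGN[v_1(t_1,\cdot)-v_2(t_2,\cdot)]\lhd[+\;-]$, the case of an identically vanishing limit being trivial since $SGN[0]=[\ ]$. As $t_1,t_2$ are arbitrary, Corollary~\ref{cor:steeper} finishes the proof. I expect the only genuinely delicate step to be the treatment of the translations $k_jL$: the argument works precisely because periodicity allows the shift to be moved onto $v_2$ while keeping a legitimate entire solution trapped between $0$ and $p$, and this is where the periodicity hypothesis is essential.
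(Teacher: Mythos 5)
Your proposal is correct and follows essentially the same route as the paper: the Heaviside datum forces $SGN[\widehat{u}(0,\cdot;a)-v_2(s,\cdot)]\lhd[+\;-]$, Lemma~\ref{lem:Z(u1-u2)} propagates this sign word forward in time, Lemma~\ref{lem:Z-lim} passes it to the $\omega$-limit, and Corollary~\ref{cor:steeper} concludes. If anything, your explicit absorption of the translations $k_jL$ into the competitor solution (legitimate by the $L$-periodicity of $f$ and $p$) spells out a step that the paper's proof leaves implicit when it passes to the limit in $j$.
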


\begin{proof}
Fix $a \in \R$, and let the sequences $t_j \rightarrow + \infty$
and $k_j \in \mathbb{Z}$ be such that $u(t+t_j,x+k_jL)\to v_1(t,x)$
locally uniformly as $j \to + \infty$. By standard parabolic
estimates, the convergence in fact holds in
$C^1_{loc} (\mathbb{R}^2)$.

Let $v$ be any entire solution lying between 0 and $p$. Since $0\leq v(t,x)\leq p(x)$, we have
$\widehat u(0,x;a)\geq v(t,x)$ for $x<a$ and
$\widehat u(0,x;a)\leq v(t,x)$ for $x>a$.  Consequently, for any
$j\in {\mathbb N}$ and $\tau\in\R$,
\[
Z \left[ \widehat{u} (0,\cdot;a) - v(\tau-t_j,\cdot) \right]=1 \ \
\hbox{and}\ \ SGN\left[ \widehat{u} (0,\cdot;a) - v(\tau-t_j,\cdot)
\right]=[+ \; -].
\]
It follows from Lemma~\ref{lem:Z(u1-u2)} that, for all 
$j \in \mathbb{N}$ and $t \geq -t_j$,
\[
Z \left[ \widehat{u} (t + t_j,\cdot;a) - v (t+\tau, \cdot)\right]
\leq 1,
\]
\[
SGN\left[ \widehat{u} (t + t_j,\cdot;a) - v (t+\tau, \cdot)\right]
\lhd [+ \; -].
\]
Passing to the limit as $j \rightarrow +\infty$, we get
\[
SGN \left[ v_1(t ,\cdot) - v(t +\tau, \cdot)\right] \lhd [+ \; -]
\quad\ \hbox{for any}\ \ t,\tau\in\R.
\]
Hence, by Corollary \ref{cor:steeper}, $v_1$ is steeper than $v$
in the sense of Definition~\ref{def:steep}.
\end{proof}

\subsection{Spreading of the solution with positive speed}

Before going to the proof of our main results, we investigate
some spreading property of solutions of \eqref{eqn1}. The result
below, which can be of independent interest, will be used repeatedly
later. Recall that $\widehat{u} (t,x;a)$ is the solution of
\eqref{eqn1} with initial data $u_0 (x) = p(x) H(a-x)$.

\begin{lemma}\label{spread}
Let Assumption~\ref{assumption-p1'} be satisfied. Then there exist
constants $0<c_* < c^* < +\infty$ that do not depend on $a$, such that
\begin{itemize}
\item [(i)] for each $c>c^*$,  one has:
$\displaystyle \lim_{t\to\infty} \sup_{x\geq ct}\widehat{u}
(t,x;a)=0$;
\item [(ii)] for each $c\in \left(0,c_* \right)$ one has
\begin{equation*}
\lim_{t\to\infty} \sup_{x\leq ct}|\widehat{u} (t,x;a)- p(x)|=0.
\end{equation*}
\end{itemize}
\end{lemma}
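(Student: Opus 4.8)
The plan is to prove the two parts by comparison arguments of opposite flavour: part (i) follows from a linear upper bound on $f$ together with an exponentially decaying supersolution, whereas part (ii) is the genuinely substantial half and rests on the compactly supported attracting datum supplied by Assumption~\ref{assumption-p1'}, combined with an iteration that upgrades local convergence into linear spreading.

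For part (i), first note that comparison with the stationary solutions $0$ and $p$, which sandwich the initial datum $0\le u_0\le p$, gives $0\le\widehat{u}(t,x;a)\le P$ for all $t,x$, where $P:=\max_{\R}p$. Since $f\in C^1$ is $L$-periodic in $x$ with $f(x,0)\equiv 0$, the mean value theorem provides a constant $K>0$ with $f(x,u)\le Ku$ for $0\le u\le P$, so $\widehat{u}$ is a subsolution of the linear equation $w_t=w_{xx}+Kw$. Setting $\lambda:=\sqrt K$ and $c^*:=2\sqrt K$, the function $\bar u(t,x):=P\,e^{-\lambda(x-c^*t-a)}$ is an exact solution of this linear equation and dominates $u_0$ at $t=0$, being $\ge P$ on $(-\infty,a]$ and positive elsewhere. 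The maximum principle on $\R$, legitimate since $\widehat{u}$ is bounded while $\bar u$ grows only exponentially, then yields $\widehat{u}(t,x;a)\le P\,e^{-\lambda(x-c^*t-a)}$. For $c>c^*$ and $x\ge ct$ the right-hand side is at most $P\,e^{\lambda a}\,e^{-\lambda(c-c^*)t}\to 0$, which proves (i) with $c^*$ independent of $a$.

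For part (ii), let $\underline u$ be the solution from Assumption~\ref{assumption-p1'}, with $\operatorname{supp}\underline u_0\subset[-R,R]$, $0\le\underline u_0<p$, and $\underline u(t,\cdot)\to p$ locally uniformly; by continuity there is $\eta>0$ with $\underline u_0\le p-\eta$ on its support. Because $f$ is $L$-periodic, each shift $\underline u(t,\cdot-kL)$ is again a solution, and $\underline u_0(\cdot-kL)\le\widehat{u}(0,\cdot;a)$ for every integer $k\le k_0:=\lfloor(a-R)/L\rfloor$, so comparison gives $\widehat{u}(t,x;a)\ge\underline u(t,x-kL)$ for all such $k$. The heart of the matter is a spreading estimate for $\underline u$ itself: using local uniform convergence together with $\underline u_0\le p-\eta$, one finds $\tau>0$ with $\underline u(\tau,x)\ge\underline u_0(x-L)$ for all $x$, and comparison plus periodicity upgrade this to
\[
\underline u(t+\tau,x)\ge\underline u(t,x-L)\qquad(t\ge 0),
\]
which iterates to $\underline u(t,\xi)\ge\underline u(t-n\tau,\xi-nL)$. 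Feeding a point $\xi\in[0,c't]$ into this with $n=\lfloor\xi/L\rfloor$ and invoking local uniform convergence on the fixed block $[0,L]$ shows, for any $c'<c_*:=L/\tau$, that $\underline u(t,\xi)\ge p(\xi)-\varepsilon$ on $[0,c't]$ once $t$ is large.

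It remains to transfer this estimate to $\widehat{u}$ on the whole half-line $\{x\le ct\}$ for $c<c_*$, shrinking $c_*$ if necessary so that $c_*<c^*$. Here I would split $\{x\le ct\}$ according to the choice of shift $k$: for $x$ below a fixed threshold one recenters by $k=\lfloor x/L\rfloor\le k_0$, so that $x-kL$ lands in $[0,L)$ where $\underline u(t,\cdot)\to p$ uniformly, while for $x$ up to $ct$ one takes $k=k_0$, so that $x-k_0L\le c't$ for large $t$ and the spreading estimate applies; in both cases periodicity gives $p(x-kL)=p(x)$, whence $\widehat{u}(t,x;a)\ge p(x)-\varepsilon$ uniformly on $\{x\le ct\}$, and together with $\widehat{u}\le p$ this is precisely (ii). The main obstacle, and the step requiring the most care, is the spreading estimate for $\underline u$: one must convert the purely qualitative local convergence of Assumption~\ref{assumption-p1'} into a quantitative linear rate, and the bookkeeping ensuring that the iterated inequality only reaches back to times $\ge T_0$ (where local convergence may be invoked) is exactly what pins down the admissible speed $c_*=L/\tau$.
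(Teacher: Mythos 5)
Your proposal is correct and follows essentially the same route as the paper: part (i) via the linear bound $f(x,u)\le Ku$ and the exponential supersolution travelling at speed $2\sqrt K$, and part (ii) by comparing $\widehat u$ with $L$-shifted copies of the attracting solution from Assumption~\ref{assumption-p1'}, using local uniform convergence to produce a time $\tau$ with $\underline u(\tau,\cdot)\ge \underline u_0(\cdot-L)$ and iterating to get the spreading speed $c_*=L/\tau$. The only differences are organizational (you iterate the single-shift inequality and split $\{x\le ct\}$ into two regions, whereas the paper iterates a maximum over all shifts and uses a ceiling-function change of time variable), and these are immaterial.
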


\begin{proof}
Note that from the $C^1$-regularity and the periodicity of $f$,
there exists $K>0$ such that for any $x\in \R$ and
$0 \leq u \leq \sup_{x \in \R} p(x)$, we have that
$f(x,u) \leq Ku$. Now define
\[
\overline{u}(t,x) := e^{-\sqrt{K} (x-a -2\sqrt{K} t)} \| p\|_\infty,
\]
which is a solution of the linear homogeneous equation
\[
\overline{u}_t- \partial_{xx} \overline{u}=K\overline{u}.
\]
Then $\overline u$ is a supersolution of \eqref{eqn1} in the range
$0 \leq u \leq \sup_{x \in \R} p(x)$. Since
$\overline{u}(0,x)\geq\|p\|_\infty \geq p(x) = \widehat{u}(0,x;a)$
for all $x \leq a$, while
$\overline{u}(0,x) \geq 0 = \widehat{u} (0,x;a)$ for all $x > a$,
it follows from the comparison principle that for all
$t \geq 0$ and $x \in \R$,
\[
\widehat{u} (t,x;a) \leq \overline{u}(t,x).
\]
Therefore, for any speed $c > 2 \sqrt{K}$,
\[
\widehat{u}(t,x+ct;a) \leq \overline{u}(t,x+ct) \rightarrow 0
\]
uniformly with respect to $x \geq 0$ as $t\rightarrow +\infty$.

Let us now find a positive lower bound for the spreading speed.
Let $u_0$ be the compactly supported function given in
Assumption~\ref{assumption-p1'}. This means that the solution
$\underline{u}$ of the Cauchy problem \eqref{eqn1}-\eqref{initial}
with initial data $0 \leq u_0 < p$ converges locally uniformly to
$p$ as $t \to +\infty$. Thanks to the periodicity of \eqref{eqn1},
one can assume without loss of generality that
\[
supp (u_0) \subset \left[a-C,a\right]
\]
for some $C>0$.  Since $\underline u\to p$ as $t\to\infty$, there
exists $T >0$ such that
\begin{equation*}
\underline{u}(T,x) \geq \max \left\{u_0 (x), u_0 (x-L) \right\}
\ \ \ \text{for any $x \in \R$
}.
\end{equation*}
By the comparison principle, it follows that
\begin{eqnarray*}
\underline{u}(2T,x) & \geq & \max \left\{ \underline{u}(T,x),\underline{u}(T,x-L)\right\} \\
& \geq & \max \left\{ u_0 (x), u_0 (x-L), u_0 (x-2L) \right\}.
\end{eqnarray*}
By induction, we obtain that for all $k \in \mathbb{N}$,
$$\underline{u} (kT,x) \geq \max \left\{u_0 (x-jL) \ | \ j \in \mathbb{N}, \ 0 \leq j \leq k \right\}.$$
Since $\widehat{u} (0,x;a) = p(x)$ for all $x \in (-\infty, a]$, we have that $$\widehat{u} (0,x;a) \geq \max \left\{u_0 (x+jL) \ | \ j \in \mathbb{N} \right\}.$$
Applying the comparison principle, one gets for all $t>0$ and $x\in \R$ that
$$\widehat{u} (t,x;a) \geq \max \left\{\underline{u} (t,x+jL) \ | \ j \in \mathbb{N} \right\}.$$
Hence, for all $x \in \R$,
$$\widehat{u} (kT,x;a) \geq \max \left\{u_0 (x-jL) \ | \ j \in \mathbb{Z}, \ j \leq k \right\}.$$
Therefore, from Assumption~\ref{assumption-p1'}, we have that for any $k \in \mathbb{N}$,
\begin{eqnarray*}
\widehat{u} (\tau + kT,x;a) & \geq & \max \left\{\underline{u} (\tau,x-jL) \ | \ j \in \mathbb{Z}, \ j \leq k \right\}, \vspace{3pt}\\
& \longrightarrow & p(x),
\end{eqnarray*}
where the convergence holds as $\tau \rightarrow +\infty$, uniformly with respect to $k~\in~\mathbb{N}$ and $x~\in~(-\infty,kL]$.

Let us now define $c_*= L / T>0$ and choose any $c$ with $0< c <c_*$.
Denote by~$\lceil y \rceil$ the ceiling function of $y$, that is,
the least integer not smaller than $y$. Then for any~$t \geq 0$, let
$$\tau (t) := t - \left\lceil \frac{ct}{L} \right\rceil T.$$
As $c < c_* = L/T$, one can easily check that $\tau\to +\infty$
as $t \rightarrow +\infty$. Thus,
$$\sup_{x \leq \left\lceil \frac{ct}{L} \right\rceil L} \left| \widehat{u} \left( \tau (t) +  \left\lceil \frac{ct}{L} \right\rceil T,x;a\right) - p(x) \right| \longrightarrow 0 \ \mbox{ as } t \rightarrow +\infty,$$
and, since $ct \leq \left\lceil \frac{ct}{L} \right\rceil L$ and $t =\tau(t) +\left\lceil \frac{ct}{L} \right\rceil T$ for all $t \geq 0$,
$$\sup_{x \leq ct} \left| \widehat{u} (t,x;a) - p(x) \right| \longrightarrow 0 \ \mbox{ as } t \rightarrow +\infty, $$
which concludes the proof of Lemma~\ref{spread}.~\end{proof}

\section{Convergence of the solutions with shifted initial data}\label{sec:lemmas}

In this section, we aim to prove an important lemma on the convergence of the solutions of the Cauchy problem with some shifted Heaviside type initial data around a given level set.

\begin{lemma}\label{heavi1lemma}
Let Assumption \ref{assumption-p1'} be satisfied. Let $x_0 \in \R$ be given. For any $0 < \alpha < p(x_0)$ and $a <x_0$, let us define
\begin{equation}\label{DEF-tau}
\tau (x_0,\alpha,a) := \min \left\{ t>0 \; | \ \widehat{u} (t,x_0;a) = \alpha \right\}.
\end{equation}
Then the following limit exists for the topology of $C_{loc}^1 (\R^2)$
\begin{equation}\label{eqn:limexist}
\lim_{a \rightarrow -\infty} \widehat{u} (t+\tau (x_0,\alpha,a),x;a):=w_\infty (t,x;\alpha).
\end{equation}
Function $w_\infty(t,x;\alpha)$ is an entire solution of \eqref{eqn1} that is steeper than any other entire solution.
Furthermore, the following alternative holds true: either it is a stationary solution, or $\partial_t w_\infty (t,x;\alpha) > 0$ for all $ (t,x) \in \R^2$.  The former assertion is impossible for each $\alpha$ close enough to $p(x_0)$.
\end{lemma}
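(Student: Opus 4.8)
The plan is to prove the statement in three stages: existence of the limit, the dichotomy (stationary versus strictly time-monotone), and the impossibility of the stationary case for $\alpha$ near $p(x_0)$.

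\medskip

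\noindent\textbf{Existence and steepness of the limit.}
First I would fix $x_0$, $\alpha$ and, for each $a<x_0$, consider the time-shifted solution $\widehat{u}(t+\tau(x_0,\alpha,a),x;a)$, which is well defined because the spreading Lemma~\ref{spread}(ii) guarantees that $\widehat{u}(t,x_0;a)\to p(x_0)>\alpha$ so the hitting time $\tau$ exists, while the upper barrier in Lemma~\ref{spread}(i) shows the solution has not yet reached $x_0$ at small times; hence $\tau$ is finite and (by continuity and monotonicity considerations) $\tau(x_0,\alpha,a)\to+\infty$ as $a\to-\infty$. Each shifted solution satisfies $\widehat u(\tau,x_0;a)=\alpha$, a normalization that pins down the family. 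By parabolic estimates the family is precompact in $C^1_{loc}(\R^2)$, so along any subsequence $a_n\to-\infty$ we extract a limit $w$, which is an entire solution of \eqref{eqn1} lying between $0$ and $p$ with $w(0,x_0)=\alpha$. The key point is that every such subsequential limit is an $\omega$-limit orbit of $\widehat u$ in the sense of Definition~\ref{def:omegalimit} (the spatial shift being absorbed by the periodicity, choosing $k_j L$ near the shift in $a$), so by the Fundamental Lemma~\ref{lem:omega-steep} each limit is steeper than any entire solution between $0$ and $p$. Uniqueness of the limit then follows from the remark in the excerpt that any two $\omega$-limit orbits are steeper than each other, hence either strictly ordered or identical up to time-shift: since all subsequential limits share the normalization $w(0,x_0)=\alpha$, strict ordering is impossible, and the common value at $(0,x_0)$ forces the time-shift to be zero, so the limit is unique. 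This establishes \eqref{eqn:limexist} along the full family and gives that $w_\infty$ is steeper than any other entire solution.

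\medskip

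\noindent\textbf{The dichotomy.}
Next I would show that $\partial_t w_\infty$ has a fixed sign. The natural tool is the zero-number argument applied to the difference $w_\infty(t+h,x)-w_\infty(t,x)$ for $h>0$, which solves a linear parabolic equation of the form \eqref{eqw}; by Lemma~\ref{lem:Z(u1-u2)} its sign-change count is nonincreasing in $t$. Using the approximation by $\widehat u(\,\cdot+\tau(x_0,\alpha,a),\cdot;a)$ and the fact that the shifted initial profiles are monotone in the shift parameter, one sees that $w_\infty(t+h,\cdot)-w_\infty(t,\cdot)$ cannot change sign, so it is of one sign for each $h$; by the normalization and the spreading behavior this sign is nonnegative, giving $\partial_t w_\infty\ge 0$. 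The strong maximum principle applied to the equation satisfied by $\partial_t w_\infty$ then yields the alternative: either $\partial_t w_\infty\equiv 0$ (so $w_\infty$ is a stationary solution) or $\partial_t w_\infty>0$ everywhere on $\R^2$.

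\medskip

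\noindent\textbf{Ruling out the stationary case near $p(x_0)$.}
The hardest step, I expect, is the final claim that the stationary alternative is impossible when $\alpha$ is close to $p(x_0)$. The strategy is by contradiction: suppose $w_\infty(\cdot;\alpha)$ is a stationary solution $q_\alpha$ for a sequence $\alpha\to p(x_0)$ with $q_\alpha(x_0)=\alpha$. By steepness and the ordering $0\le q_\alpha\le p$, these stationary solutions would have to converge to $p$ as $\alpha\to p(x_0)$, exhibiting $p$ as a limit of distinct $L$-periodic stationary solutions from below. The contradiction should come from Assumption~\ref{assumption-p1'}: the existence of a compactly supported subsolution attracted to $p$ forces $p$ to be, in an appropriate sense, isolated and stable from below with respect to \eqref{eqn1-per}, so it cannot be accumulated by a family of stationary states lying strictly below it. Concretely, I would use the attractiveness in Assumption~\ref{assumption-p1'} together with the sliding/comparison argument already deployed in the proof of Lemma~\ref{spread} to show that no stationary solution can sit between such an accumulating family and $p$, and that a genuine stationary limit $q_\alpha<p$ with $q_\alpha(x_0)$ arbitrarily close to $p(x_0)$ contradicts the strict spreading of $\widehat u$ toward $p$ guaranteed by Lemma~\ref{spread}(ii). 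The delicate bookkeeping here — relating the hitting-time normalization, the limiting stationary profile, and the attractor $p$ — is where the real work lies.
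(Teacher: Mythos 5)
Your overall architecture matches the paper's proof quite closely: finiteness of the hitting time, parabolic compactness, identification of subsequential limits as $\omega$-limit orbits so that Lemma~\ref{lem:omega-steep} applies, uniqueness of the limit from mutual steepness plus the normalization $w_\infty(0,x_0)=\alpha$, the strong maximum principle for the strict alternative, and a comparison argument based on Assumption~\ref{assumption-p1'} to rule out the stationary case for $\alpha$ near $p(x_0)$. Your third stage (stationary limits $q_\alpha$ with $q_\alpha(x_0)\to p(x_0)$ must accumulate at $p$, which comparison with the datum $u_0$ of Assumption~\ref{assumption-p1'} forbids) is precisely the content behind the paper's terse claim that $p$ is ``isolated''; the incidental assertion that the $q_\alpha$ are $L$-periodic is unjustified but also unnecessary.

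The dichotomy step, however, contains a genuine gap as written. You claim that $w_\infty(t+h,\cdot)-w_\infty(t,\cdot)$ ``cannot change sign'' as a consequence of the zero-number argument together with ``the fact that the shifted initial profiles are monotone in the shift parameter.'' Monotonicity of $a\mapsto\widehat u(t,x;a)$ is irrelevant here: you are comparing one solution with its own time translate, not two solutions with ordered initial data, and $\widehat u$ itself is not monotone in $t$. What the zero-number argument actually gives is this: since $\widehat u(h,\cdot;a)-\widehat u(0,\cdot;a)$ is nonpositive on $(-\infty,a)$ and positive on $(a,+\infty)$, Lemma~\ref{lem:Z(u1-u2)} yields $SGN\left[\widehat u(t+h,\cdot;a)-\widehat u(t,\cdot;a)\right]\lhd[-\;+]$ for all $t\geq 0$, hence $SGN\left[w_\infty(t+h,\cdot)-w_\infty(t,\cdot)\right]\lhd[-\;+]$ in the limit. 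This still allows exactly one sign change (left part moving down, right part moving up), so it does not give monotonicity by itself. The missing ingredient --- and the paper's actual mechanism --- is steepness: $w_\infty$ and $w_\infty(\cdot+h,\cdot)$ are both $\omega$-limit orbits of $\widehat u$, hence steeper than each other by Lemma~\ref{lem:omega-steep}, so any crossing of their graphs (which a sign change of the difference would produce) forces $w_\infty(\cdot+h,\cdot)\equiv w_\infty(\cdot,\cdot)$ and hence $\partial_t w_\infty(t,\cdot)\equiv 0$; equivalently, combine your $[-\;+]$ bound with the $[+\;-]$ pattern that mutual steepness imposes at any crossing. Since you established steepness of $w_\infty$ in your first stage, the repair is one line, but the mechanism you cite would not close the argument. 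A smaller instance of the same looseness appears in your uniqueness step: ``the common value at $(0,x_0)$ forces the time-shift to be zero'' is not quite right (the shift need not vanish if $w_\infty$ is time-periodic or stationary); the clean statement is that applying Definition~\ref{def:steep} at $(0,x_0)$ with $t_1=t_2=0$ gives either identity or two contradictory strict inequalities on the spatial derivatives.
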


This lemma states that if we look at some well chosen level set for shifted initial data, the profile of the solution locally converges to a monotonically increasing entire solution~$w_\infty$ of \eqref{eqn1}, which connects two stationary solutions. In the sequel we will show that $w_\infty$ is a pulsating traveling wave.

A similar result holds in a nonperiodic heterogeneous framework, where one could show that $\widehat{u} (t+\tau (x_0,\alpha,a),x;a)$ converges as $a \rightarrow -\infty$ to the steepest entire solution of \eqref{eqn1} taking the value $\alpha$ at the point $(0,x_0)$. However, as the notion of traveling wave is not clear in general, we chose to restrict ourselves to the more standard periodic setting.\\

The proof of this result is split into three parts.
Before we begin the proof of this lemma, let us make the following remark, explaining the choice of such shifted initial data.
\begin{remark}\label{rem:period}
Notice that one has
\begin{equation*}
\widehat{u}(t,x;a+L)=\widehat{u}(t,x-L;a),\;\;\forall (t,x,a)\in [0,\infty)\times\R\times\R.
\end{equation*}
This implies that for any $0< \alpha  < p(x_0)$ and $k \in \mathbb{N}$:
\begin{equation*}
\tau (x_0, \alpha,a-kL)=\tau (x_0 +L, \alpha,a-(k-1)L).
\end{equation*}
Now since the initial data $p(x)H(a-x)$ is increasing with respect to $a$, the comparison principle provides that for each given $(t,x)\in\R^2$,  the maps $a\mapsto \widehat{u}(t,x;a)$ and $k \mapsto \tau (x_0, \alpha,a-kL)$ are nondecreasing.

Let $x_0 \in \R$ and $a < x_0$ be given. Then function $w_\infty$ defined in Lemma~\ref{heavi1lemma} rewrites as
\begin{eqnarray*}
w_\infty (t,x;\alpha) & = & \lim_{k \rightarrow +\infty} \widehat{u} \left(t+ \tau(x_0, \alpha, a-kL),x;a-kL \right) \vspace{3pt}\\
& =& \lim_{k \rightarrow +\infty} \widehat{u} \left(t+ \tau(x_0, \alpha, a-kL),x+kL;a \right).
\end{eqnarray*}
As it is clear that $\tau(x_0, \alpha, a-kL) \rightarrow +\infty$ as $k \rightarrow +\infty$, the above computations explain the choice of the shifts of the initial data in order to study the large time behavior of the solution of the Cauchy problem.
\end{remark}

\subsection{Existence of $\tau (x_0,\alpha,a)$ and $w_\infty$}

Let us first recall that Assumption \ref{assumption-p1'} holds true. Let $x_0 \in \R$ and $0 < \alpha < p(x_0)$ be given. Let us first note that for any $a <x_0$, the following quantity exists and is finite
$$\tau (x_0,\alpha,a) := \min \left\{ t>0 | \ \widehat{u} (t,x_0;a) = \alpha \right\} < +\infty.$$
Indeed, let $k$ be some large enough given integer such that $supp \; (u_0 (\cdot +kL)) \subset (-\infty,a]$, wherein $u_0$ is the compactly supported function arising in Assumption~\ref{assumption-p1'}. Recall that the corresponding solution of the Cauchy problem converges locally uniformly to $p$. Since $u_0 (x+kL) < p(x)$ for all $x \leq a$, one has
$$u_0 (\cdot+kL) \leq \widehat{u} (0,\cdot;a) \leq p(\cdot).$$
Therefore, $\widehat{u} (t,x;a)$ converges locally uniformly with respect to $x$ to $p$ as $t \rightarrow +\infty$. In particular, this leads us to $ \widehat{u} (t,x_0;a) \rightarrow p(x) > \alpha$ as $t \rightarrow +\infty$.
On the other hand, since $a<x_0$, we have $\widehat{u} (0,x_0;a) = 0 < \alpha$ and the existence of $\tau (x_0, \alpha,a)$ immediately follows.\\

We now aim  to prove that the following limit exists for all $(t,x) \in \R^2$:
$$
w_\infty (t,x;\alpha) = \lim_{a \rightarrow -\infty} \widehat{u} (t+\tau (x_0,\alpha,a),x;a).$$
To do so, let us first notice that from parabolic estimates, the family of functions $\left\{ \widehat{u} (t+\tau (x_0,\alpha,a),x;a)\right\}_{a< x_0}$ is uniformly bounded along with their derivatives. Therefore it is relatively compact for the topology of $C^1_{loc} (\R^2)$ with respect to  $(t,x)$.

Let $(a_k)_{k \in \mathbb{N}}$ be a given sequence such that $a_k\to -\infty$ as $k \rightarrow +\infty$, and such that the following limit holds true:
$$\widehat{u} (t+\tau (x_0,\alpha,a_k),x;a_k) \rightarrow w_\infty (t,x),$$
as $k \rightarrow +\infty$, wherein $w_\infty$ is some function and where the convergence holds in $C^1_{loc} (\mathbb{R}^2)$. Up to a subsequence, one may assume that $a_k \rightarrow a_\infty$ in $\R_{/L\mathbb{Z}}$. Then, from Remark~\ref{rem:period}, one can check that $w_\infty$ is an $\omega$-limit orbit of $\widehat{u} (t,x,a_\infty)$, and it therefore follows from Lemma~\ref{lem:omega-steep} that it is steeper than any other entire solution in the sense of Definition~\ref{def:steep}.

Recalling Definition~\ref{def:steep}, there is a unique entire solution $w_\infty$ of \eqref{eqn1} that is steeper than any other entire solution, and such that $w_\infty (0,x_0) =\alpha$. It follows that $w_\infty$ does not depend on the choice of the sequence $\{a_k\}$. Finally the relative compactness of the family of functions $\left\{ \widehat{u} (t+\tau (x_0,\alpha,a),x;a)\right\}_{a< x_0}$ completes the proof of the existence of
$$w_\infty (t,x;\alpha) = \lim_{a \rightarrow -\infty} \widehat{u} (t+\tau(x_0,\alpha,a),x;a),$$
together with the convergence for the topology of $C_{loc}^1 (\mathbb{R}^2)$.

\subsection{Monotonicity in time of $w_\infty$}
In order to complete the proof of Lemma \ref{heavi1lemma}, it remains to prove the alternative part.
To do so, we will show that function $w_\infty$ is nondecreasing with respect to time.
We will more precisely prove that for any $t \in \R$, $\partial_t w_\infty (t,\cdot;\alpha)$ does not change sign.
To prove this statement, we will argue by contradiction by assuming that for some given $t_1 \in \R$, there exist $x_1\in\R$ and $x_2\in\R$ such that
\begin{equation}\label{contrat}
\partial_t w_\infty (t_1,x_1;\alpha)>0 \mbox{ and } \partial_t w_\infty (t_1,x_2;\alpha) <0.
\end{equation}
It is then clear that for any $\tau$ small enough, one has
$$Z \left[ w_\infty (t_1 + \tau,\cdot;\alpha)- w_\infty (t_1,\cdot;\alpha) \right] \geq 1.$$
Besides, recall that $w_\infty$ is an $\omega$-limit orbit of $\widehat{u}$, and so is $w_\infty (\cdot+ \tau, \cdot)$ for any $\tau \in \R$. Therefore, they are steeper than each other and it immediately follows that $w_\infty (t_1 +\tau,\cdot;\alpha) \equiv w_\infty (t_1,\cdot;\alpha)$ for each $\tau$ small enough. Hence,
$$\partial_t w_\infty (t_1, \cdot;\alpha) \equiv 0,$$
a contradiction together with \eqref{contrat}.
This implies that for any $t\in\R$, one has
\begin{equation}\label{SGNmono}
SGN \left[ \partial_t w_\infty (t,\cdot;\alpha) \right] = [\; ] \mbox{ or } [+ ] \mbox{ or } [ -].
\end{equation}
Let us denote by $\Phi :=\partial_t w_\infty$. It is an entire solution of the linear parabolic equation
$$\partial_t \Phi = \partial_{xx} \Phi + \partial_u f (x,w_\infty) \Phi.$$
We infer from \eqref{SGNmono} and the strong maximum principle that either $\partial_t w_\infty < 0$, either $\partial_t w_\infty >0$ or $\partial_t w_\infty \equiv 0$.
Next due to the definition of $\tau (x_0,\alpha,a)$, one has $\partial_t w_\infty (0,x_0) \geq 0$.
This completes the proof of the alternative part of Lemma~\ref{heavi1lemma}.

To conclude the proof of Lemma \ref{heavi1lemma}, let us show that when $\alpha$ is chosen close enough to $p(x_0)$ then $w_\infty$ cannot be a stationary solution of \eqref{eqn1}. To show that let us first notice that due to Assumption~\ref{assumption-p1'}, the stationary solution $p$ is  isolated with respect to the other stationary solutions. Therefore, one can choose $\alpha$ close enough to $p(x_0)$ so that there is no stationary solution $q$ with $q(x_0) = \alpha$. Then due to Assumption ~\ref{assumption-p1'}, $w_\infty$ is not a stationary solution and it converges to $p$ as $t \rightarrow +\infty$. This completes the proof of the lemma.


\section{Convergence to a propagating terrace}\label{sec:periodic}

The aim of this section is to prove the convergence of the solutions to a propagating terrace.
Since only small differences will arise depending on whether Assumption~\ref{assumption-p2'} holds or not, we give in this section a common proof for both Theorem~\ref{heavip1} and Theorem~\ref{heavip2}. We will explicitely write down whenever we use Assumption~\ref{assumption-p2'}.

In this section, we will first show that the functions $w_\infty (t,x;\alpha)$, constructed in the previous sections, are either traveling waves or stationary solutions. Using some well chosen values of $\alpha$, we will then be able to construct, by using iterative arguments, the minimal propagating terrace describing the long time behavior of the solution $\widehat{u}$ of $\eqref{eqn1}$ with an Heaviside-type initial data, as stated
in Theorem~\ref{heavip2}. Lastly, we will prove that it satisfies all the required statements.

\subsection{Convergence to a pulsating traveling wave for some level sets}\label{CVpulswave}

Recalling Definition \eqref{DEF-tau}, let us define the sequence $$\tau_k := \left\{ \begin{array}{ll}
\tau (x_0, \alpha,a-kL)-\tau (x_0 , \alpha, a-(k-1)L) & \mbox{ if } k \geq 1 \vspace{3pt}\\
\tau (x_0, \alpha, a) & \mbox{ if } k=0 \end{array} \right. ,$$
so that for all $k \in \mathbb{N}$,
$$\tau (x_0,\alpha, a-kL) = \sum_{i=0}^k \tau_i .$$
Then the following result holds true:
\begin{lemma}
For any $\alpha \in (0,p(x_0)$, the entire solution $w_\infty$ provided by Lemma \ref{heavi1lemma} is either a positive periodic stationary solution, or a pulsating traveling wave.
\end{lemma}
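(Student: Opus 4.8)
\emph{Proposed proof.}

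The plan is to produce a time-shift $T$ for which $w_\infty(t,x-L)\equiv w_\infty(t+T,x)$, and then read off the two alternatives. Recall from Lemma~\ref{heavi1lemma} that $w:=w_\infty(\cdot,\cdot;\alpha)$ is an entire solution, steeper than every entire solution lying between $0$ and $p$, and that either $w$ is stationary or $\partial_t w>0$ everywhere. Set $\bar w(t,x):=w(t,x-L)$. By the remark following Definition~\ref{def:omegalimit}, $\bar w$ is again an $\omega$-limit orbit of $\widehat u(\cdot,\cdot;a)$, so by Lemma~\ref{lem:omega-steep} both $w$ and $\bar w$ are steeper than each other. Moreover, since $a\mapsto\widehat u(t,x;a)$ is nondecreasing (Remark~\ref{rem:period}) and $\widehat u(t,x;a+L)=\widehat u(t,x-L;a)$, we have $\widehat u(s,y-L;a)\ge\widehat u(s,y;a)$, and passing to the limit in $w(t,x)=\lim_k\widehat u(t+\sigma_k,x+kL;a)$ (with $\sigma_k:=\tau(x_0,\alpha,a-kL)$) gives the coarse spatial monotonicity $\bar w(t,x)\ge w(t,x)$ for all $(t,x)$.

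The technical crux is a \emph{trapping} of $\bar w$ between two finite forward time-shifts of $w$, namely
\[
w\!\left(t+\tfrac{L}{c^*},x\right)\ \le\ \bar w(t,x)\ \le\ w\!\left(t+\tfrac{L}{c_*},x\right)\qquad\text{for all }(t,x),
\]
where $c_*<c^*$ are the spreading constants of Lemma~\ref{spread}. This is the step I expect to be the main obstacle. The idea is to translate the finite positive spreading speed into time-delay estimates: using the recursion $\widehat u(t+\sigma_{k+1},x+kL;a)=\widehat u(t+\tau_{k+1}+\sigma_k,x+kL;a)$ (so that $\bar w$ is obtained from $w$ by the increments $\tau_{k+1}$), together with the coarse monotonicity above and the bounds of Lemma~\ref{spread}, one controls how much time-advance is "worth" a left-shift by one period. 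The delicate point is that the gap $c_*<c^*$ prevents a naive linear estimate on the increments $\tau_{k}$ from being uniform; one must instead compare the $\omega$-limit orbits directly and use that behind the line $x=ct$ (any $c<c_*$) the solution is already near $p$, while ahead of $x=ct$ (any $c>c^*$) it is near $0$.

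Granting the trapping, I would finish by a sliding argument in the time variable. Assume first $\partial_t w>0$, so $s\mapsto w(s,\cdot)$ is strictly increasing. Define $s^{*}:=\inf\{\,s:\ w(s,\cdot)\ge\bar w(0,\cdot)\ \text{on }\R\,\}$, which by the trapping lies in $[L/c^{*},\,L/c_{*}]$ and in particular is positive. At $s=s^{*}$ the graphs of $w(s^{*},\cdot)$ and $\bar w(0,\cdot)$ must touch at some point $x^{*}$ (a tangency, by minimality of $s^{*}$ and continuity); one must also check the touching point does not escape to infinity, using the uniform approach of $w(t,\cdot)$ to its limit states as $t\to\pm\infty$. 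Since $w$ is steeper than $\bar w$ and the tangency forces $\partial_x w(s^{*},x^{*})=\partial_x\bar w(0,x^{*})$, the strict-gradient alternative in Definition~\ref{def:steep} is excluded, so $w(\cdot+s^{*},\cdot)\equiv\bar w(\cdot,\cdot)$; that is, $w(t+s^{*},x)=w(t,x-L)$, a pulsating traveling wave with $T=s^{*}$ and speed $c=L/T\in[c_{*},c^{*}]>0$. Letting $t\to\pm\infty$ in this identity shows the limit states $p_{\pm}(x):=w(\pm\infty,x)$ satisfy $p_{\pm}(x-L)=p_{\pm}(x)$, so $w$ connects two $L$-periodic stationary solutions as required by Definition~\ref{def:puls}. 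Finally, if instead $w$ is stationary, every time-shift of $w$ equals $w$, so the trapping collapses to $w(x)\le\bar w(x)=w(x-L)\le w(x)$, whence $w$ is $L$-periodic; and $w>0$ follows from the strong maximum principle since $w\ge 0$ and $w(0,x_0)=\alpha>0$. This establishes the stated dichotomy.
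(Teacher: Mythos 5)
Your overall architecture (reduce everything to an identity $w_\infty(t,x-L)\equiv w_\infty(t+T,x)$ for some $T\ge 0$, then read off the dichotomy) is sound, and your closing steps --- the sliding/steepness argument and the identification of the limit states --- would work once that identity is available. But the proof has a genuine gap exactly where you flag it: the ``trapping'' inequality $w(t+L/c^*,x)\le w(t,x-L)\le w(t+L/c_*,x)$ is never proven, and the route you sketch for it cannot succeed. Lemma~\ref{spread} only controls the \emph{Ces\`aro averages} of the increments $\tau_{k}=\tau(x_0,\alpha,a-kL)-\tau(x_0,\alpha,a-(k-1)L)$ (this is \eqref{estim:tausum}); it says nothing about individual increments, which may oscillate with subsequences tending to $0$ and to $+\infty$ simultaneously while the averages stay in $[L/c^*,L/c_*]$: the level sets of $\widehat u$ may stall for arbitrarily long times and then advance abruptly. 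In that regime no finite forward time-shift of $w$ can be shown to dominate $w(\cdot,\cdot-L)$ by spreading bounds alone, and this oscillatory case is not a pathology to be excluded --- it is precisely the case in which $w_\infty$ turns out to be a spatially periodic \emph{stationary} solution, which requires its own argument. The paper instead runs a dichotomy on $(\tau_k)$: if some subsequence converges to a finite $T>0$, then passing to the limit in $\widehat u(t+\tau_{k}+\tau(x_0,\alpha,a-(k-1)L),x;a-(k-1)L)$ yields $w_\infty(t+T,x)=w_\infty(t,x-L)$ directly (no trapping or sliding is needed, since the time shift is produced by the sequence itself); otherwise subsequences of $(\tau_k)$ tend to $0$ and to $+\infty$, the $0$-subsequence gives spatial $L$-periodicity of $w_\infty$, and a contradiction argument (using the $C^1_{loc}$ convergence of $\widehat u(\cdot+\tau(x_0,\alpha,a),\cdot;a)$ near $x_0+L$ and the definition of $\tau$) shows that $\partial_t w_\infty>0$ would force $(\tau_k)$ to be bounded, contradicting the $+\infty$-subsequence. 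Your trapping statement is true only \emph{a posteriori}, as a consequence of the lemma itself, so assuming it begs the question.

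A second, smaller gap: even granting the trapping, your sliding argument needs the touching point at $s=s^*$ not to escape to $x=\pm\infty$, and you propose to use ``the uniform approach of $w(t,\cdot)$ to its limit states as $t\to\pm\infty$''. At this stage $w$ is only known to be monotone in time; its convergence to stationary limits as $t\to\pm\infty$ is locally uniform (by Dini plus parabolic estimates), not uniform in $x$, and nothing is yet known about the behavior of $w(t,x)$ as $x\to\pm\infty$ --- the limit states are not yet known to be $L$-periodic, nor is any spatial asymptotics available. So this compactness step would also require a substantive argument, which is a further reason the paper's route, where $T$ comes for free from the sequence $(\tau_k)$ rather than from a sliding argument, is the one that closes.
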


\begin{proof}
The proof of this result relies on some properties of the sequence $\{\tau_k\}$. It is split into two parts.
Let us first assume that there exists some subsequence $(\tau_{k_j})_{j \in \mathbb{N}}$ converging to some $T>0$.
Then we obtain that
\begin{eqnarray*}
w_\infty (t+T,x;\alpha) & = & \lim_{k \rightarrow + \infty} \widehat{u} \left(t+ \tau_k + \tau (x_0, \alpha, a-(k-1)L),x;a-(k-1)L\right)\\
& = & \lim_{k \rightarrow + \infty} \widehat{u} \left(t+ \tau (x_0, \alpha, a-kL),x-L;a-kL\right)\\
& = & w_\infty (t,x-L;\alpha).
\end{eqnarray*}
Moreover, Lemma~\ref{heavi1lemma} provides that $\partial_t w_\infty \geq 0$ and therefore it converges as $t \rightarrow \pm \infty$ to two periodic stationary solutions $p_{\pm}$ (the periodicity follows from the above equality). If the two functions $p_{+}$ and $p_{-}$ are distinct, then $w_\infty$ is a pulsating traveling wave. If they are identically equal, then $w_\infty$ is a periodic stationary solution. Furthermore, it is positive since $w_\infty (0,x_0;\alpha) = \alpha >0$ and the strong maximum principle.\\

Let us now consider the case when no subsequence of $(\tau_k)_k$ converges to some positive constant and let us show that $w_\infty$ is stationary. Due to Remark~\ref{rem:period}, it is clear that for all $k \in \mathbb{N}$, $\tau_k \geq 0$. On the other hand, it follows from the spreading speed property provided by Lemma~\ref{spread} that
\begin{equation}\label{estim:tausum}
\frac{L}{c^*} \leq \liminf_{k\to\infty}\frac{ \tau(x_0, \alpha, a-kL)}{k} \leq \limsup_{k\to\infty}\frac{\tau(x_0, \alpha, a-kL)}{k}  \leq \frac{L}{c_*}.
\end{equation}
Therefore, since no subsequence of $(\tau_k)_k$ converge to some positive constant, \eqref{estim:tausum} implies that one can find two subsequences converging respectively to $0$ and $+\infty$.

By considering a subsequence converging to $0$, the same computations as above with $T = 0$ lead us to
\begin{eqnarray}\label{winfty-periodic}
w_\infty (t,x;\alpha) & = & w_\infty (t,x-L;\alpha),
\end{eqnarray}
and function $w_\infty$ is $L$-periodic with respect to the space variable for all time.

In order to show that $w_\infty$ is stationary, let us argue by contradiction by assuming that $w_\infty$ is not stationary.
Then using Lemma~\ref{heavi1lemma}, one has that
$$\partial_t w_\infty (0,x_0+L;\alpha) >0.$$
Thanks to the $C_{loc}^1$ convergence of $\widehat{u}(\cdot+\tau(x_0, \alpha,a),\cdot;a)$ to $w_\infty$ as $a \rightarrow -\infty$, there exists some $\delta >0$ such that for any $0 \leq t \leq \delta $ and $a$ large enough, one has
$$\partial_t \widehat{u} (t+\tau(x_0, \alpha,a),x_0+L;a) \geq \frac{ \partial_t w_\infty (0,x_0+L;\alpha)}{2}>0.$$
On the other hand, using \eqref{winfty-periodic}, for any $\epsilon >0$, the following holds true for any $a$ large enough
$$\widehat{u} (\tau (x_0, \alpha,a),x_0+L;a) \geq \alpha- \epsilon.$$
Then one gets
$$\widehat{u} (\delta+\tau(x_0,\alpha,a),x_0+L;a) \geq \alpha - \epsilon +  \frac{ \partial_t w_\infty (0,x_0+L;\alpha)}{2} \; \delta.$$
By choosing $\epsilon$ small enough, we conclude that $\widehat{u} (\delta + \tau (x_0, \alpha,a);x_0+L;a) > \alpha$, thus $$\delta > \tau (x_0+L, \alpha,a) - \tau (x_0, \alpha, a) = \tau (x_0, \alpha,a-L) - \tau (x_0, \alpha, a),$$
for any $a$ large enough. In particular, one gets that the sequence $\{\tau_k\}$ is bounded, which contradicts the existence of a subsequence going to $+\infty$.
This completes the proof of the result.
\end{proof}\\

In this subsection, we have proven that the limit $w_\infty$ is either a periodic positive stationary solution or a pulsating traveling wave. In the monostable case, there is no periodic positive stationary solution between $0$ and $p$, so that~$w_\infty$ is always a pulsating traveling wave connecting $0$ to $p$, which already gives part~$(i)$ of Theorem~\ref{heavip1}. Together with Assumption~\ref{assumption-p1'}, it is clear that $p$ is isolated, so that one can choose $\alpha$ close enough to $p(x_0)$, so that function $w_\infty$ is a pulsating traveling wave connecting some periodic stationary solution $p_1$ to $p$.

\begin{remark}\label{CVtaukspeed}
Note that it follows from the above proof as well as the uniqueness of the speed, that in the case where $w_\infty$ is a pulsating traveling wave, then the whole sequence $\tau_k$ converges to $\frac{L}{c}$ where $c$ is the speed of $w_\infty$. This will be used later in the paper.
\end{remark}

\subsection{Construction of the terrace of traveling fronts}\label{sec:discussp*}

We now aim to construct a terrace composed of pulsating fronts.
We will proceed by iteration to construct such a terrace. Let us first notice that, as mentioned above, by choosing $\alpha$ close enough to $p(x_0)$, one can find a wave $U_1 (t,x)= w_\infty (t,x;\alpha)$ connecting some periodic stationary solution $p_1 <p$ to $p$. This gives us the first step of our iterative argument which is related to the following claim:
\begin{lemma}\label{iteration-terrace}
Assume that for some $0<\alpha_k<p(x_0)$, function $$U_k (t,x) := w_\infty (t,x;\alpha_k)$$ is a pulsating traveling wave connecting $ p_k>0$ to $p_{k-1}>p_k$.

Then $p_k$ is isolated from below and stable from below with respect to \eqref{eqn1-per}. Furthermore, there exists some $\alpha_{k+1} < p_k (x_0)$ such that $$U_{k+1} (t,x) :=w_\infty (t,x;\alpha_{k+1})$$ is a pulsating traveling wave connecting some stationary periodic solution $p_{k+1} <p_k$ to~$p_k$.
\end{lemma}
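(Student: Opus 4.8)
The plan is to prove Lemma~\ref{iteration-terrace} in two stages: first establish the claimed properties of $p_k$, then construct the next step $U_{k+1}$ of the terrace. For the first stage, recall that $U_k = w_\infty(\cdot,\cdot;\alpha_k)$ is a pulsating traveling wave connecting $p_k$ to $p_{k-1}$, so $U_k(t,\cdot)\to p_k$ locally uniformly as $t\to-\infty$ while $U_k>p_k$ by the monotonicity $\partial_t w_\infty>0$ from Lemma~\ref{heavi1lemma}. Thus $U_k$ is an ancient solution lying strictly above $p_k$ and converging to $p_k$ as $t\to-\infty$; by the characterization recalled after Assumption~\ref{assumption-p2'} (Theorem~8 in~\cite{Matano84}), this makes $p_k$ \emph{stable from below} with respect to~\eqref{eqn1-per}, once we know $p_k$ is isolated from below. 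To obtain isolatedness from below, I would invoke the Fundamental Lemma~\ref{lem:omega-steep}: the wave $U_k$ is steeper than every entire solution lying between $0$ and $p$, and in particular steeper than any $L$-periodic stationary solution $q$ with $q<p_k$. If $p_k$ were not isolated from below, there would be a sequence of stationary solutions accumulating at $p_k$ from below; comparing $U_k$ with such $q$ close to $p_k$, the steepness property (via Corollary~\ref{cor:steeper} and the zero-number bound) forces a contradiction, since the graph of $U_k$ would have to cross the nearly-tangent constant-in-time $q$ in a way incompatible with $Z\le 1$. This is the delicate point and I expect it to be the main obstacle.

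For the second stage, I would run the same machinery as in Subsection~\ref{CVpulswave} but now relative to the step $p_k$ rather than $p$. The key observation is that $p_k$ is isolated from below, so I may choose a value $\alpha_{k+1}$ with $0<\alpha_{k+1}<p_k(x_0)$ that is close enough to $p_k(x_0)$ so that there is no $L$-periodic stationary solution $q$ with $q(x_0)=\alpha_{k+1}$ satisfying $q\le p_k$. I then consider $U_{k+1}(t,x):=w_\infty(t,x;\alpha_{k+1})$, which by Lemma~\ref{heavi1lemma} and the lemma proved in Subsection~\ref{CVpulswave} is either a positive periodic stationary solution or a pulsating traveling wave. I must rule out the stationary alternative. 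The argument is that $w_\infty(\cdot,\cdot;\alpha_{k+1})$ is steeper than $U_k$ and takes the value $\alpha_{k+1}<p_k(x_0)$ at $(0,x_0)$, so it lies below $U_k$; its $\omega$-limit as $t\to+\infty$ must be a periodic stationary solution $p_+\le p_k$, and the choice of $\alpha_{k+1}$ excludes a stationary $w_\infty$ with $w_\infty(0,x_0)=\alpha_{k+1}$ unless $w_\infty\equiv p_+$ everywhere, which is impossible since $p_+(x_0)$ cannot equal $\alpha_{k+1}$. Hence $w_\infty$ is genuinely time-increasing, so by the dichotomy it is a pulsating traveling wave.

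It then remains to identify the endpoints of $U_{k+1}$. Since $\partial_t w_\infty>0$, the wave $U_{k+1}$ connects two periodic stationary solutions $p_{k+1}:=U_{k+1}(-\infty,\cdot)$ and $p_+:=U_{k+1}(+\infty,\cdot)$ with $p_{k+1}<p_+$. I would show $p_+=p_k$: because $U_{k+1}$ is steeper than $U_k$ and lies below it (their values at $(0,x_0)$ are ordered), a zero-number comparison gives $U_{k+1}(t,\cdot)\le U_k(s,\cdot)$ after suitable shifts, forcing $p_+\le p_k$; the choice of $\alpha_{k+1}$ close to $p_k(x_0)$ together with the isolatedness of $p_k$ from below then excludes any periodic stationary solution strictly between $p_+$ and $p_k$, yielding $p_+=p_k$. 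Finally $p_{k+1}<p_k$ is immediate since $U_{k+1}$ is strictly increasing in time and nonconstant. This completes the inductive step.

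I expect the isolatedness-from-below argument in the first stage to be the crux, since it is exactly where the steepness property of Lemma~\ref{lem:omega-steep} must be leveraged against a hypothetical accumulating sequence of equilibria; the construction of $U_{k+1}$ in the second stage is then a faithful repetition of the reasoning already established for the top step, with $p_k$ playing the role previously played by $p$.
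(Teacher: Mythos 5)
Your second stage (the construction of $U_{k+1}$ once the properties of $p_k$ are established) follows the paper's own Step~3 closely and is essentially sound. The genuine gap is in your first stage, and it is twofold. First, your isolatedness argument cannot work as sketched: you propose to compare $U_k$ with a hypothetical sequence of periodic stationary solutions $q_j<p_k$ accumulating at $p_k$, and to extract a contradiction from the steepness of $U_k$. But $U_k$ is strictly increasing in time with $U_k(-\infty,\cdot)=p_k$, so $U_k(t,x)>p_k(x)>q_j(x)$ for all $(t,x)$: the graphs of $U_k$ and $q_j$ \emph{never intersect}, and, as the paper notes right after Definition~\ref{def:steep}, solutions with disjoint ranges are trivially steeper than each other. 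The bound $Z\le 1$ is vacuously satisfied and yields no contradiction whatsoever. Second, your stability claim misapplies the characterization from \cite{Matano84}: that characterization concerns solutions of the periodic problem \eqref{eqn1-per}, whereas $U_k$ is a pulsating wave and is not $L$-periodic in $x$; and even formally, an ancient solution converging to $p_k$ from \emph{above} as $t\to-\infty$ would witness instability from \emph{above} --- it says nothing about stability from \emph{below}, which is the property actually claimed. So neither of the two properties of $p_k$ is proved, and since your stage two (choice of $\alpha_{k+1}$, exclusion of the stationary alternative, identification of the upper limit with $p_k$) explicitly relies on isolatedness from below, the induction does not close.

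The paper's proof of both properties rests on a mechanism entirely absent from your proposal: a race between the spreading of $\widehat{u}$ and slowly moving supersolutions that cross $p_k$. For isolatedness, it introduces the periodic principal eigenvalue $\mu(\lambda)$ of the linearization around $p_k$ with drift $\lambda$ (problem \eqref{principal-eigen}); the accumulation of equilibria $q_j\nearrow p_k$ forces $\mu(0)=0$, and formula \eqref{eq:Nadin_} gives $\mu(\lambda)-\mu(0)=O(\lambda^2)$, so that $v:=\min\left\{p_k,\, q_j+e^{-\lambda(x-ct)}\phi_\lambda\right\}$ is a supersolution ahead of an interface $x(t)$ moving at any prescribed speed $0<c<c_*$, provided $\lambda$ is small and $j$ is large. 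Claim~\ref{supersolpk} --- which combines Lemma~\ref{spread} (so that $\widehat{u}$, spreading at speed at least $c_*>c$, overtakes the interface) with the steepness of $p_k$ (Claim~\ref{walphapk}) --- then shows that $v$ must asymptotically dominate $p_k$ along the interface, contradicting the fact that $v(t,x(t)+A)<p_k(x(t)+A)$ for some fixed $A>0$. Stability from below is proved by the same scheme, with the supersolution built either from a Dirichlet principal eigenfunction as in \eqref{principal-eigen_K} (when $\mu(0)<0$) or from a time-slice of the downward unstable-set trajectory of $p_k$ (when $\mu(0)=0$); this latter case is where \cite{Matano84} genuinely enters. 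Without an argument of this type, pitting the positive spreading speed of $\widehat{u}$ against eigenvalue-based supersolutions, your first stage has no proof.
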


\begin{remark}
Note that the iteration will clearly end if one obtains
$p_k \equiv 0$ at some step $k$.
\end{remark}
In order to prove the above lemma, we begin with some preliminary
claims:

\begin{claim}\label{walphapk}
The stationary solution $p_k$ is steeper than any other entire
solution and, moreover,
\[
w_\infty (t,x;p_k (x_0)) \equiv p_k (x).
\]
\end{claim}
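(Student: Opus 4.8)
The plan is to apply Lemma~\ref{heavi1lemma} at the specific level $\alpha = p_k(x_0)$, which is admissible since $0 < p_k(x_0) < p(x_0)$, and to show that the resulting entire solution $w_\infty(\cdot,\cdot;p_k(x_0))$ is nothing but the stationary solution $p_k$. Since Lemma~\ref{heavi1lemma} already asserts that $w_\infty(\cdot,\cdot;p_k(x_0))$ is steeper than every entire solution, the identity $w_\infty(t,x;p_k(x_0)) \equiv p_k(x)$ will immediately yield that $p_k$ is steeper than every entire solution as well. Thus the whole claim reduces to establishing this identity.

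Write $\beta := p_k(x_0)$. Both $w_\infty(\cdot,\cdot;\beta)$ and $U_k = w_\infty(\cdot,\cdot;\alpha_k)$ are steeper than any entire solution, hence steeper than each other. As observed after the fundamental lemma, two such solutions are either identical up to a time shift or strictly ordered, meaning that for every pair $(t_1,t_2)$ the profiles $w_\infty(t_1,\cdot;\beta)$ and $U_k(t_2,\cdot)$ never take a common value, so that one lies strictly above the other on all of $\R$. First I would discard the first alternative: because $U_k$ connects $p_k$ to $p_{k-1}$ with $\partial_t U_k > 0$, one has $U_k(t,x_0) > p_k(x_0) = \beta$ for every finite $t$, whereas $w_\infty(0,x_0;\beta) = \beta$; hence no time shift of $U_k$ can coincide with $w_\infty(\cdot,\cdot;\beta)$, and the strict ordering must therefore hold for all $(t_1,t_2)$.

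The heart of the argument is then a touching/maximum-principle step. Evaluating the strict ordering at $x = x_0$ and using $w_\infty(0,x_0;\beta) = \beta < U_k(t_2,x_0)$ forces $w_\infty(0,\cdot;\beta) < U_k(t_2,\cdot)$ on all of $\R$, for every $t_2$; letting $t_2 \to -\infty$ and using $U_k(t_2,\cdot) \to p_k$ gives $w_\infty(0,\cdot;\beta) \leq p_k$, with equality at $x_0$. I then invoke the alternative of Lemma~\ref{heavi1lemma}. If $w_\infty(\cdot,\cdot;\beta)$ were strictly increasing in time, the same reasoning at each $t_1 \leq 0$ (where $w_\infty(t_1,x_0;\beta) \leq \beta$) gives $w_\infty(t_1,\cdot;\beta) \leq p_k$ throughout $(-\infty,0]\times\R$, so that the nonnegative difference $p_k - w_\infty(\cdot,\cdot;\beta)$, which solves a linear parabolic equation with bounded coefficient, vanishes at the interior point $(0,x_0)$; the strong maximum principle then forces it to vanish identically for $t \leq 0$, contradicting strict monotonicity. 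Hence $w_\infty(\cdot,\cdot;\beta)$ is stationary, say $\equiv q$, with $q \leq p_k$ and $q(x_0) = p_k(x_0)$; since $q - p_k$ attains an interior maximum $0$ at $x_0$, its value and first derivative match those of $p_k$ there, and uniqueness for the ODE $u'' + f(x,u) = 0$ gives $q \equiv p_k$, completing the identification.

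The step I expect to require the most care is the strict-ordering dichotomy and its use at $x_0$: one must verify that no crossing of the profiles is possible --- which is precisely where the mutual steepness of $w_\infty(\cdot,\cdot;\beta)$ and $U_k$ enters --- and that the endpoint inequality $U_k(t,x_0) > \beta$ is strict for every finite $t$. The remaining ingredients, namely passing to the limit $t_2 \to -\infty$, the parabolic strong maximum principle, and ODE uniqueness at the tangency point, are standard.
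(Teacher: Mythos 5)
Your proof is correct, but it takes a genuinely different route from the paper's, essentially running the logic in the opposite order. The paper first proves that $p_k$ is steeper than any entire solution $v$ directly: since $U_k=w_\infty(\cdot,\cdot;\alpha_k)$ is steeper than everything (Lemma~\ref{lem:omega-steep}), one has $SGN\left[U_k(t',\cdot)-v(t,\cdot)\right]\lhd[+\;-]$ for all $t',t$, and letting $t'\to-\infty$ (so that $U_k(t',\cdot)\to p_k$) this relation survives by the semicontinuity of $Z$ and $SGN$ under pointwise convergence (Lemma~\ref{lem:Z-lim}); Corollary~\ref{cor:steeper} then gives the steepness of $p_k$, and the identity $w_\infty(\cdot,\cdot;p_k(x_0))\equiv p_k$ follows in one line, because the two functions are then steeper than each other and intersect at $(0,x_0)$, so Definition~\ref{def:steep} forces them to coincide. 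You instead prove the identity first --- via the mutual-steepness ordering dichotomy against $U_k$, the limit $t_2\to-\infty$ giving $w_\infty(\cdot,\cdot;\beta)\le p_k$ with a touching point at $(0,x_0)$, then the monotone/stationary alternative, the parabolic strong maximum principle, and ODE uniqueness --- and deduce the steepness of $p_k$ as a corollary; all the ingredients you invoke (strict monotonicity of $U_k$, $U_k(-\infty,\cdot)=p_k$, the alternative in Lemma~\ref{heavi1lemma}) are available at this point of the paper, so the argument is sound. What the paper's route buys is economy: it stays entirely inside the zero-number formalism, needs no case distinction, no maximum principle and no ODE argument, and it yields the steepness statement (the part actually used later) directly against an arbitrary $v$. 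What your route buys is independence from Lemma~\ref{lem:Z-lim}: you never pass a zero-number relation to a limit, replacing it by a comparison/touching argument. A small simplification of your own argument: once you know $p_k-w_\infty(\cdot,\cdot;\beta)\ge 0$ on $(-\infty,0]\times\R$ (which only uses $\partial_t w_\infty\ge 0$, valid in both branches of the alternative) and that it vanishes at $(0,x_0)$, the strong maximum principle gives $w_\infty\equiv p_k$ for $t\le 0$, and forward uniqueness of the Cauchy problem extends this to all $t$; so the case distinction and the ODE-uniqueness step can be merged into this single stroke.
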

\begin{proof}
Let $v$ be an entire solution of \eqref{eqn1} such that $0<v<p$
and that $v(t_1,x_1)=p_k (x_1)$ for some $(t_1,x_1) \in \R^2$.
We know from Lemma \ref{lem:omega-steep} that
$w_\infty (\cdot,\cdot;\alpha_k)$ is steeper than any other entire
solution between $0$ and $p$, thus for any $t'$ and $t$ in $\R$,
\[
Z \left[w_\infty (t',\cdot;\alpha_k) - v(t,\cdot) \right] \leq 1,
\]
\[
SGN \left[w_\infty (t',\cdot;\alpha_k) - v(t,\cdot) \right] \lhd
[+ \; -].
\]
Passing to the limit as $t'\rightarrow -\infty$, one gets, for all
$t \in \R$,
\[
Z \left[p_k(\cdot) - v(t,\cdot) \right] \leq 1,
\]
\[
SGN \left[p_k (\cdot) - v(t,\cdot) \right] \lhd [+ \; -].
\]
This implies, by Corollary \ref{cor:steeper}, that $p_k$ is steeper
than $v$. Since $v$ is arbitrary, in particular, $p_k$ is steeper
than $w_\infty (\cdot,\cdot;p_k (x_0))$. On the other hand, we know by
Lemma \ref{lem:omega-steep} that $w_\infty (\cdot,\cdot;p_k (x_0))$
is steeper than $p_k$. Thus these two functions are steeper than
each other.  Furthermore, neither lies strictly above or below the
other since $p_k (x_0) = w_\infty (0,x_0;p_k(x_0))$. Thus we
conclude that $w_\infty (t,x;p_k(x_0)) \equiv p_k(x)$, which
completes the proof of Claim \ref{walphapk}.
\end{proof}

\begin{claim}\label{supersolpk}
Let $v\equiv v(t,x)$ be a given function satisfying $0< v (t,x) < p(t,x)$ and let $x(t)$ be a nondecreasing function moving with average speed $0<c<c_*$ (where $c_*$ the minimal speed of spreading of $\widehat{u}$ provided by Lemma~\ref{spread}). Assume that
$$v(t,x(t))=p_k (x(t)), \; \ \forall t \in \R,$$
$$v(t,x) \mbox{ is a super-solution of \eqref{eqn1} on } D:=\left\{ (t,x) | \; x \geq x(t) \right\}.$$
Then there exists a sequence $t_j \rightarrow +\infty$ such that for any $x \geq 0$, $$\liminf_{j \rightarrow +\infty} v (t_j,x(t_j)+ x) - p_k (x(t_j)+x) \geq 0.$$
\end{claim}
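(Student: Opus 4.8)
The plan is to compare $v$ from below with the spreading solution $\widehat u(\cdot,\cdot;a)$ for shifts $a\to-\infty$, but only up to the instant at which the spreading front overtakes the moving point $x(t)$, and then to pass to the limit using the fundamental lemma (Lemma~\ref{lem:omega-steep}) together with Claim~\ref{walphapk}. First I would define, for each $a$ small enough that $a<x(0)$, the \emph{passage time}
\[
t(a):=\min\{t>0 \,:\, \widehat u(t,x(t);a)=p_k(x(t))\}.
\]
This is well defined and finite: at $t=0$ one has $\widehat u(0,x(0);a)=p(x(0))H(a-x(0))=0<p_k(x(0))$, while since $x(t)$ moves with average speed $c<c_*$, Lemma~\ref{spread}$(ii)$ gives $\widehat u(t,x(t);a)\to p(x(t))>p_k(x(t))$ as $t\to+\infty$; continuity then produces a first crossing. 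Using the explicit supersolution $\overline u$ from the proof of Lemma~\ref{spread} (finite rightward spreading speed) one checks that $t(a)\to+\infty$ as $a\to-\infty$, hence also $x(t(a))\to+\infty$. By Remark~\ref{rem:period} I may further restrict to shifts $a=a_*-nL$, $n\in\mathbb N$, so that $\widehat u(\cdot,\cdot;a)$ is a pure spatial $L$-translate of $\widehat u(\cdot,\cdot;a_*)$.

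Second, I would run the comparison principle on the truncated domain $D\cap\{0\le t\le t(a)\}$. On its parabolic boundary $\widehat u\le v$: on the initial slice, $\widehat u(0,x;a)=0\le v(0,x)$ for $x\ge x(0)>a$; on the lateral boundary $x=x(t)$ with $0\le t<t(a)$, the definition of $t(a)$ as the \emph{first} crossing gives $\widehat u(t,x(t);a)<p_k(x(t))=v(t,x(t))$. Since $\widehat u$ is a genuine solution (hence a subsolution) and $v$ a supersolution on $D$, the comparison principle yields
\[
\widehat u(t(a),x(t(a))+x;a)\le v(t(a),x(t(a))+x)\qquad (x\ge 0).
\]

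Third, I would pass to the limit along $a\to-\infty$ in the frame attached to $x(t(a))$. Write $x(t(a))=k_aL+\rho_a$ with $k_a\in\mathbb Z$, $\rho_a\in[0,L)$, and extract $a_j\to-\infty$ with $\rho_{a_j}\to x_0$; set $t_j:=t(a_j)$. By parabolic estimates the shifted solutions $\widehat u(\cdot+t_j,\cdot+k_{a_j}L;a_j)$ converge in $C^1_{loc}(\R^2)$ (up to a subsequence) to an $\omega$-limit orbit $U_\infty$ of $\widehat u(\cdot,\cdot;a_*)$. By Lemma~\ref{lem:omega-steep}, $U_\infty$ is steeper than any entire solution; moreover $U_\infty(0,x_0)=\lim_j\widehat u(t_j,x(t_j);a_j)=\lim_j p_k(\rho_{a_j})=p_k(x_0)$. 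Since by Claim~\ref{walphapk} $p_k$ is itself steeper than any entire solution and takes the same value $p_k(x_0)$ at $(0,x_0)$, the two are steeper than each other and agree at a point; by Definition~\ref{def:steep} (as $p_k$ is stationary) this forces $U_\infty\equiv p_k$. Consequently, for each fixed $x\ge 0$,
\[
\liminf_{j\to+\infty} v(t_j,x(t_j)+x)\ \ge\ \lim_{j\to+\infty}\widehat u(t_j,x(t_j)+x;a_j)=U_\infty(0,x_0+x)=p_k(x_0+x)=\lim_{j\to+\infty}p_k(x(t_j)+x),
\]
which is exactly the asserted inequality $\displaystyle\liminf_{j}\big(v(t_j,x(t_j)+x)-p_k(x(t_j)+x)\big)\ge 0$.

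The main obstacle, and the reason the conclusion is stated as a $\liminf$ along a sequence rather than as a clean bound $v\ge p_k$ on $D$, is that the inequality $\widehat u\le v$ cannot survive beyond $t(a)$: once the front overtakes $x(t)$ one has $\widehat u(t,x(t);a)\to p>p_k=v(t,x(t))$ on the lateral boundary, so the comparison collapses precisely where one would want it. For finite $a$, stopping at $t(a)$ exposes only the front profile of $\widehat u$, which still dips strictly below $p_k$ just to the right of $x(t(a))$. The crucial point is that this deficit disappears in the limit $a\to-\infty$, because the re-centred spreading solution observed at the level $p_k(x_0)$ flattens to the stationary state $p_k$ itself; making this identification rigorous through the steepness of $\omega$-limit orbits and the uniqueness recorded in Claim~\ref{walphapk} is the heart of the argument.
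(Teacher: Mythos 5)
Your proposal is correct and follows essentially the same route as the paper's own proof: the paper translates $v$ to the right by $jL$ and keeps $a$ fixed, whereas you shift $a\to-\infty$ along $a_*-nL$, which is the same relative translation by Remark~\ref{rem:period}. Beyond that relabeling, the ingredients coincide --- first-crossing time at the moving boundary, comparison principle on the truncated domain, and identification of the recentred $\omega$-limit of $\widehat u$ with $p_k$ via the mutual steepness given by Lemma~\ref{lem:omega-steep} and Claim~\ref{walphapk}.
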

\begin{proof}
Let us look at the intersection of $\widehat{u} (t,\cdot;a)$ and $v_j (t,x) := v (t,x-jL)$ for any $j \in \mathbb{N}$. Note that $v_j$ is a super-solution for \eqref{eqn1} on the domain
$$D_j := \left\{ (t,x) \; | \ (t,x-jL) \in D \right\},$$
and, moreover, that $\widehat{u} (0,\cdot;a) = 0 \leq v_j (0,\cdot)$ on the half-space $x \geq x(0)+jL$ for each $j \in \mathbb{N}$ large enough.

Since $x(t)$ moves with the average speed $c$ smaller than the minimal spreading speed $c_*$ of $\widehat{u}$, one has that for any $j \in \mathbb{N}$, $$\widehat{u} (t,x(t)+jL;a) \rightarrow p(x(t)+jL) > v_j (t,x(t)+jL)=v (t,x(t)),$$
as $t \rightarrow +\infty$. Thus, for any $j$ large enough, there exists some minimal time $t_j$ such that $$\widehat{u} (t_j,x(t_j)+jL;a) = v_j (t_j,x(t_j)+jL)=p_k(x(t_j)).$$
Since $v_j$ is a super-solution on the domain $D_j$, one can check that
\begin{eqnarray*}
\widehat{u} (t_j,x;a) & \leq &  v_j (t_j,x) \mbox{ for any } x \geq x(t_j)+jL,\vspace{5pt} \\
& \leq & v (t_j,x-jL) \mbox{ for any } x \geq x(t_j)+jL.
\end{eqnarray*}
One can easily check that $t_j \rightarrow +\infty$ as $j \rightarrow+\infty$. Therefore, by standard parabolic estimates and possibly up a subsequence, one may assume that $x (t_j) \rightarrow x_\infty$ in $\R_{/ L\R}$ and that $\widehat{u} (t+t_j,x(t_j)-x_\infty +jL+x;a)$ converges as $j \rightarrow +\infty$ to some $\omega$-limit~$v_\infty$ that satisfies
$$v_\infty (0, x_\infty ) = p_k (x_\infty) .$$
Since $v_\infty$ is steeper than $p_k$, and conversely from Claim~\ref{walphapk}, it follows that $v_\infty \equiv p_k$. Therefore, we get that
$$\liminf_{j \rightarrow +\infty} v (t_j,x(t_j)+x) - p_k(x(t_j)+x) \geq 0,$$
for any $x \geq 0$ and the result follows.
 \end{proof}

We are now able to prove Lemma~\ref{iteration-terrace}.\\
\begin{proof}[Proof of Lemma~\ref{iteration-terrace}] We will split the proof of this lemma into several parts.

\paragraph{Step 1: $p_k$ is isolated from below.}
Assume by contradiction that there exists some sequence $\{q_j\}_j$ of periodic stationary solutions such that $q_j \rightarrow p_k$ as $j \rightarrow +\infty$ and $q_j < p_k$ for any $j \in \mathbb{N}$. Using standard elliptic estimates, one can easily show that the convergence holds uniformly in $C^1 (\mathbb{R})$.

Let us introduce the following principal eigenvalue problem:
\begin{equation}\label{principal-eigen}
\left\{
\begin{array}{rcl}
-\partial_{xx} \phi_\lambda + 2 \lambda \partial_x \phi_\lambda - \frac{\partial f}{\partial u} (x,p_k (x)) \phi_\lambda & = & \mu (\lambda) \phi_\lambda \ \mbox{ in } \R, \vspace{5pt}\\
\phi_\lambda >0 \mbox{ and  $L$-periodic}.
\end{array}
\right.
\end{equation}
Note that the principal eigenvalue $\mu (\lambda)$, which is associated to the linearized problem around $p_k$, satisfies:
\begin{itemize}
\item [$(i)$] $\mu (\lambda) - \mu (0) = O (\lambda^2)$ on a neighborhood of $0$;
\item [$(ii)$] $\mu (0) =0$.
\end{itemize}
The first statement $(i)$ follows from the following formula taken from~\cite{LLM10} (see Proposition~7.1), which is adapted from Nadin in~\cite{Nadin10}:
\begin{equation}\label{eq:Nadin_}
\mu (\lambda) = \min_{\overset{\eta \in H_{per}^1}{\;\eta>0}}\frac{1}{\int_0^L \eta^2 dx} \left(\mathcal F(p_k,\eta) 
+\lambda^2 \left( \int_0^L \eta^2 dx - \frac{L^2}{\int_0^L \eta^{-2}dx}\right) \right),
\end{equation}
where $\mathcal F(p_k,\eta)$ is the functional defined by
\begin{equation*}
\mathcal F(p_k,\eta)=\int_0^L \left( \eta'^2 - \frac{\partial f}{\partial u} (x,p_k (x)) \right) dx.
\end{equation*}
Next $(ii)$ follows from the fact that $p_k$ is an accumulation point of periodic stationary solutions.

Let us now construct some super-solution of \eqref{eqn1}. Consider the function $v$ defined by
$$v (t,x) :=  \min \left\{ p_k (x), e^{-\lambda ( x-ct)}  \phi_\lambda + q_j (x)\right\},$$
wherein $0 < c < c_*$ ($c_*$ being the minimal speed of spreading of $\widehat{u}$, provided by Lemma~\ref{spread}), $\lambda >0$ while $\phi_\lambda$ is a solution of \eqref{principal-eigen}.

It is clear that there exists some increasing map $t \mapsto x(t)$ moving with the average speed $c$ and such that
$$v(t,x(t))=p_k(x(t)), \; \ \forall t \in \R,$$
$$v(t,x) < p_k (x), \forall t \in \R \mbox{ and } \forall x > x(t).$$
Let us now define
$$D:= \left\{ (t,x) \; | \ x \geq x(t) \right\},$$
and compute on this set the following quantity:
\begin{equation*}
\begin{split}
\partial_t v - &\partial_{xx} v - f(x,v)  \\
=& e^{-\lambda (x-ct)} \left( (c\lambda - \lambda^2) \phi_\lambda + 2 \lambda \partial_x \phi_\lambda - \partial_{xx} \phi_\lambda \right) 
-\partial_{xx} q_j\\
& - f \left(x,  q_j + e^{-\lambda (x-ct)} \phi_\lambda \right) \\
 =&    e^{-\lambda (x-ct)} \left( (c\lambda - \lambda^2) \phi_\lambda + 2 \lambda \partial_x \phi_\lambda - \partial_{xx} \phi_\lambda \right) \\
&  - \frac{\partial f}{\partial u} (x, q_j) e^{-\lambda (x-ct)} \phi_\lambda + o\left(\min\left\{ p_k -q_j,  \phi_\lambda e^{-\lambda (x-ct)}\right\} \right)\\
 =&   e^{-\lambda (x-ct)} \left( (c\lambda - \lambda^2) \phi_\lambda + 2 \lambda \partial_x \phi_\lambda - \partial_{xx} \phi_\lambda \right) \\
&  - \frac{\partial f}{\partial u} (x, p_*) e^{-\lambda (x-ct)} \phi_\lambda + o\left(\min\left\{ p_k -q_j,  \phi_\lambda e^{-\lambda (x-ct)}\right\} \right)\\
 =&    e^{-\lambda (x-ct)}  (c\lambda - \lambda^2 - \mu (\lambda) ) \phi_\lambda+o\left(\min\left\{ p_k -q_j,  \phi_\lambda e^{-\lambda (x-ct)}\right\} \right),\\
 >&  0,
\end{split}
\end{equation*}
where the last inequality holds for any $j$ large enough and any $\lambda$ small enough, on the domain $D$.

Next Claim~\ref{supersolpk} applies and provides the existence of a sequence $t_j \rightarrow +\infty$ such that
\begin{equation}\label{claim+++}
\liminf_{j \rightarrow +\infty} v(t_j ,x(t_j)+x) - p_k (x(t_j)+x) \geq 0,
\end{equation}
as $j \rightarrow +\infty$ and for any $x \geq 0$. On the other hand, from the definition of $v$, there exists some constant $A>0$ such that for any $t \in \R$:
$$v (t,x(t)+A;a) < p_k (x(t)+A).$$
This contradicts \eqref{claim+++} and we conclude that $p_k$ is isolated from below with respect to \eqref{eqn1-per}.

\paragraph{Step 2: Stability from below}
To prove this statement we will argue by contradiction and we assume that $p_k$ is unstable from below with respect to \eqref{eqn1-per}. Let us distinguish two cases.

Assume first that $p_k$ is linearly unstable, that is $\mu (0) <0$ where $\mu$ is defined as in \eqref{principal-eigen}. Then, proceeding as in the well-known monostable case (see for instance \cite{BHR05}), one can find a stationary super-solution of the form $v(x):=p_k(x) - \kappa \psi_R (x)$, with $\kappa>0$ is small enough and wherein $\psi_R$ is a principal eigenfunction of the following problem:
\begin{equation}\label{principal-eigen_K}
\left\{
\begin{array}{l}
-\partial_{xx} \psi_R - \frac{\partial f}{\partial u} (x,p_k (x)) \psi_R  = \mu_R \psi_R \ \mbox{ in } (-R,R), \vspace{5pt}\\
\psi_R >0 \mbox{ and } \psi_R\left(\pm R\right) = 0.
\end{array}
\right.
\end{equation}
One can check, using the regularity of $f$ and the fact that $\mu_R \rightarrow \mu (0) <0$ as $ R\rightarrow +\infty$, that $v$ is a super-solution of \eqref{eqn1}. As before, one can then apply Claim~\ref{supersolpk} to reach a contradiction in this case.

Assume now that $p_k$ is not linearly unstable, namely $\mu (0) =0$. Since $p_k$ is unstable from below with respect to \eqref{eqn1-per}, we also know that there exists some entire solution $U(t,x)$, decreasing in time and periodic with respect to the space variable, that belongs to the unstable set of $p_k$ in the downward direction (we refer to \cite{Matano84}), that is, such that
$$U (t,x) < p_k (x) \ \forall (t,x) \in \R^2,$$
$$U(t,x) \rightarrow p_k (x) \; \mbox{ as } t\rightarrow -\infty.$$
Let $\{t_j\}_{j \in \mathbb{N}}$ be a sequence converging to $-\infty$ as $j \rightarrow +\infty$, and such that the $L$-periodic function
$$r_j (x) := U (t_j,x),$$ satisfies
$$\partial_{xx} r_j + f(x,r_j (x)) < 0.$$
Similarly as above, we construct a super-solution crossing $p_k$ and use Claim~\ref{supersolpk} to reach a contradiction. Let us consider the function
$$w(t,x) := r_j (x) + e^{-\lambda (x-ct)} \phi_\lambda (x),$$
wherein $0<c<c_*$ and $c_*$ is the minimal speed of spreading of $\widehat{u}$), $\lambda >0$, while $\phi_\lambda$ is a solution of \eqref{principal-eigen}. As before, one can check that function $w$ satisfies the hypotheses of Claim~\ref{supersolpk} and stays away from below to $p_k$ as $t \rightarrow +\infty$. We again reach a contradiction.

\paragraph{Step 3: Convergence to a pulsating traveling wave for $\alpha < p_k (x_0)$}
This last step is rather easier. We already know that for any $0 < \alpha < p_k (x_0)$, we have that $w_\infty (t,x;\alpha)$ is either a pulsating traveling wave or a positive and periodic stationary solution. But we have just shown that $p_k$ is isolated from below, therefore similarly as we have done before in the case $p_k=p$ to begin our iteration, for any $\alpha$ close enough to $p_k (x_0)$, $w_\infty (t,x;\alpha)$ is a pulsating traveling wave connecting some stationary solution $p_{k+1}$ to $p_k$.
\end{proof}\\

To conclude the proof of the existence of a propagating terrace, it remains to show that the sequence is finite (or equivalently, that $p_k \equiv 0$ at some step $k$).

Let us argue by contradiction and assume that it is not. Since the sequence $\{p_k\}_k$ is monotonically decreasing, it converges uniformly to some $p_\infty \geq 0$, a periodic stationary solution of \eqref{eqn1}. As in the proof of Lemma~\ref{iteration-terrace}, we use a super-solution crossing some $p_k$ to get a contradiction.

We introduce the following principal eigenvalue problem:
\begin{equation}\label{principal-eigen2}
\left\{
\begin{array}{rcl}
-\partial_{xx} \psi_\lambda + 2 \lambda \partial_x \psi_\lambda - \frac{\partial f}{\partial u} (x,p_\infty (x)) \psi_\lambda & = & \nu (\lambda) \psi_\lambda \ \mbox{ in } \R, \vspace{5pt}\\
\psi_\lambda >0 \mbox{ and  $L$-periodic}.
\end{array}
\right.
\end{equation}
The above defined eigen-problem is the same as \eqref{principal-eigen}, with $p_k$ is replaced by $p_\infty$. As before, one has that $\nu (\lambda)$ satisfies:
\begin{itemize}
\item [$(i)$] $\nu (\lambda) \geq \nu (0)$ for any $\lambda$, and $\nu (\lambda) - \nu (0) = O (\lambda^2)$ on a neighborhood of $0$;
\item [$(ii)$] $\nu (0) =0$.
\end{itemize}
Let us now introduce the following function
$$z (t,x) :=  e^{-\lambda ( x-ct)}  \psi_\lambda + p_\infty (x),$$
wherein $0 < c < c_* $, $\lambda >0$ and $\psi_\lambda$ is a solution of \eqref{principal-eigen2}. Using the same computations as in the proof of Lemma~\ref{iteration-terrace}, one gets that
\begin{equation*}
\partial_t z - \partial_{xx} z - f(x,z) > 0,
\end{equation*}
for all $\lambda$ small enough, on some domain of the form $\{ (t,x) | \; x \geq x(t) \}$, wherein $x(t)$ moves with the average speed $c$ and satisfies for some $k$ large enough and any $t \in \R$:
$$z (t,x(t))=p_k (x(t)) \mbox{ and } z (t,x) \leq p_k (x) \mbox{ for } x \geq x(t).$$
As $z (t,x) \rightarrow p_\infty (x) < p_k (x)$ as $x \rightarrow +\infty$ uniformly with respect to $t \in \R$, one can proceed as before to reach a contradiction together with Claim~\ref{supersolpk}.

We conclude that the iterative process stops in a finite number $N$ of steps. This allows us to construct a propagating terrace, which is called $T^*$.

\begin{remark}
In fact, we have not yet proven that the sequence $(c_k)_k$ of the speeds of the pulsating traveling waves $U_k$ is nondecreasing. However, this directly follows from Lemma~\ref{iteration-terrace}. Indeed Lemma~\ref{iteration-terrace} also states that we have a decreasing sequence $(\alpha_k)_k$ such that
$$U_k (t,x)= w_\infty (t,x;\alpha_k).$$
Then, from Remark~\ref{CVtaukspeed}, the speed $c_k$ of $U_k$ can be obtained as
$$c_k = \lim_{j \rightarrow +\infty} \frac{jL}{ \tau (x_0,\alpha_k,a-jL)},$$
and since $\alpha \mapsto \tau (x_0,\alpha,b)$ is increasing for any $x_0$ and $b$ (see Remark \ref{rem:period}), one obtains that the sequence $c_k$ is nondecreasing.
\end{remark}

To conclude the proof of Theorem~\ref{heavip2}, it remains to check that the propagating terrace $T^*$ satisfies all the required statements. This is in fact straightforward from all the above. Part~$(i)$ indeed immediately follows from the construction of the terrace and Lemma~\ref{iteration-terrace}. Part~$(ii)$ follows from Claim~\ref{walphapk} and the construction of the $U_k$ as some $\omega$-limit orbits of $\widehat{u}$.

Moreover, one can easily check that $T^*$ is minimal. Indeed, let us argue by contradiction by assuming that it is not. Then, one can easily check that there exists some $k$ and some traveling wave $V$ crossing $p_k$. This contradicts the fact that it is steeper than any other entire solution.

Lastly, let us check that any other minimal propagating terrace $T$ is equal to $T^*$. Let $T=((q_k)_k,(V_k)_k)$ be a given other minimal propagating terrace. Then it immediately follows from the definition that the two sequences $(q_k)_k$ and $(p_k)_k$ are identically equal. Then, for any $k$, $V_k$ and $U_k$ are steeper than each other (from Definition~\ref{def:terrace} and part~$(ii)$ of Theorem~\ref{heavip2}) and intersect, hence they are identically equal up to some time shift.

This ends the proof of Theorem~\ref{heavip2}.

\subsection{Locally or uniform convergence to the waves}

In this section, we prove the convergence part of Theorem~\ref{heavip1} as well as Theorem~\ref{heavip2-CV}.

Let us first show the locally uniform convergence to the pulsating traveling waves $(U_k)_{1\leq k \leq N}$ along the moving frames with speed $c_k$ and some sublinear drifts. Let us fix some $1 \leq k \leq N$. For any large enough $t$ , let us define $j(t) \in \mathbb{N}$ such that
$$j(t) \frac{L}{c_k} \leq t < \left(j(t) +1\right) \frac{L}{c_k},$$
and let us introduce
$$t_{j(t)} := \sum_{i=0}^{i=j(t)} \tau_i ,$$
wherein $\tau_j$ is defined as in Section~\ref{CVpulswave}, with $\alpha = \alpha_k$ chosen so that $U_k (\cdot, \cdot) = w_\infty (\cdot,\cdot;\alpha_k)$.
Let us now consider $m_k (t)$, the piecewized affine function, defined by
$$m_k (t)= t_{j(t)} -t \ \ \text{   if   } \ t =j(t)\frac{L}{c_k}.$$
Recall that the sequence $\{\frac{1}{j} \sum_{i=0}^{i=j} \tau_i\}_j$ converges to $\frac{L}{c_k}$, so that $m_k(t) = o(j(t))=o(t)$ as $t \to +\infty$.

Furthermore, since
$$U_k (t,x)= w_\infty (t,x;\alpha_k) = \lim_{j \rightarrow +\infty} \widehat{u} (t+\tau_0 + ... + \tau_j, x +jL;a)$$
where the above convergence is understood to hold locally uniformly with respect to $(t,x)\in\mathbb R^2$, and since $t+m_k (t)-t_{j(t)} \sim (t-j(t) \frac{L}{c_k})$ and $x+c_k t - j(t) L$ stay bounded, one can check that
$$\widehat{u} (t+m_k (t),x+c_k t;a) - U_k \left(t-j(t) \frac{L}{c_k},x - j(t)L +c_k t  \right) \to 0 \ \ \text{ as } \ t \to +\infty.$$
Thus, we obtain
$$\widehat{u} (t,x+c_k (t-m_k (t)) ;a) - U_k \left( t- m_k (t),x+c_k (t -m_k (t) )\right) \to 0 \ \ \text{ as } \ t \to +\infty, $$
wherein both of the above convergences hold locally uniformly with respect to~$x \in \R$. This completes, in the general case, the convergence result \eqref{eqn:CVcor1} stated in Theorem~\ref{heavip2-CV}.\\

It now remains to consider what happens "outside" of the moving frames with speed $(c_k)_{1 \leq k \leq N}$. This will follow from the following monotonicity property:

\begin{claim}\label{claim:period-mon}
For all $(t,x)\in [0, +\infty) \times \R$, one has
$$\widehat{u} (t,x;a) \geq \widehat{u} (t,x+L;a).$$
\end{claim}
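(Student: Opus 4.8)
The plan is to derive this monotonicity directly from the two facts already recorded in Remark~\ref{rem:period}, namely the periodicity identity for $\widehat u$ and the monotonicity of $a\mapsto\widehat u(t,x;a)$, so that no new maximum-principle argument is really needed. The key is to rewrite the translated solution $\widehat u(t,x+L;a)$ as an \emph{untranslated} solution with a \emph{shifted} initial position $a$, and then invoke monotonicity in $a$.

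Concretely, I would start from the identity $\widehat u(t,x;a+L)=\widehat u(t,x-L;a)$ stated in Remark~\ref{rem:period}, valid for all $(t,x,a)\in[0,\infty)\times\R\times\R$. Replacing $x$ by $x+L$ and $a$ by $a-L$ in this identity, the left-hand side becomes $\widehat u(t,x+L;a)$ and the right-hand side becomes $\widehat u(t,x;a-L)$, so that
\[
\widehat u(t,x+L;a)=\widehat u(t,x;a-L)\qquad\text{for all }(t,x)\in[0,\infty)\times\R.
\]
Since the initial datum $p(x)H(a-x)$ is nondecreasing in $a$ and $p>0$, the comparison principle gives that $a\mapsto\widehat u(t,x;a)$ is nondecreasing for each fixed $(t,x)$ — exactly the property noted in Remark~\ref{rem:period}. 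Applying this with $a\geq a-L$ yields $\widehat u(t,x;a)\geq\widehat u(t,x;a-L)$, which combined with the displayed identity is precisely the claim $\widehat u(t,x;a)\geq\widehat u(t,x+L;a)$.

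I expect no genuine obstacle here: the statement is essentially immediate once one couples the periodicity relation with monotonicity in $a$, and the only point requiring care is getting the index shift in the periodicity identity correct (one must shift both $x$ and $a$ consistently). As an independent check — and an alternative route avoiding Remark~\ref{rem:period} altogether — one can verify the inequality directly at $t=0$: using the $L$-periodicity of $p$, $\widehat u(0,x+L;a)=p(x)H(a-L-x)\leq p(x)H(a-x)=\widehat u(0,x;a)$ because $H$ is nondecreasing, and then the comparison principle propagates this ordering to all $t\geq 0$. Either way the proof is short and purely order-theoretic.
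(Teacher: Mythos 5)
Your proof is correct and takes essentially the same route as the paper: the paper's proof of this claim is the single line ``This Claim directly follows from Remark~\ref{rem:period},'' and your argument --- combining the periodicity identity $\widehat{u}(t,x;a+L)=\widehat{u}(t,x-L;a)$ with the comparison-principle monotonicity of $a\mapsto\widehat{u}(t,x;a)$, both recorded in that remark --- is exactly the spelled-out version of that one-liner. Your alternative check (ordering the initial data at $t=0$ and propagating by comparison, using that $x\mapsto x+L$ maps solutions to solutions by the $L$-periodicity of $f$) is the same argument in slightly different clothing, so there is nothing further to flag.
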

\begin{proof}
This Claim directly follows from Remark~\ref{rem:period}.\end{proof}\\

Let us first look on the left of the terrace, that is, when 
$x + c_1 (t-m_1 (t)) \rightarrow -\infty$. In that case, we will 
use the fact that
\begin{equation}\label{eqn:asymp0}
\lim_{t \rightarrow +\infty} U_1 (t,x) \equiv p (x).
\end{equation}
Let $\delta >0$ be a given small enough number. From the asymptotics of $U_1$, there exists~$x_\delta$ such that for all $t$:
$$ p(x)-\frac{\delta}{2} \leq U_1(t-m_1 (t),x+c_1 (t-m_1 (t))) \leq p(x) \text{  for all } x\leq -x_\delta +L.$$
Next for each time large enough one has  for all $x\in [-x_\delta , x_\delta]$:
$$| \widehat{u} (t,x+c_1 (t-m_1 (t));a) - U_1 (t-m_1 (t),x+c_1 (t-m_1 (t)))| \leq \frac{\delta}{2}.$$
Then, using Claim~\ref{claim:period-mon}, one gets for all large enough $t$:
$$ p(x)-\delta \leq \widehat{u} (t,x+c_1 (t-m_1 (t));a) \leq p(x) \text{  for all } x\leq -x_\delta +L.$$

One can proceed similarly to get that for any $\delta >0$, there exists $C$ such that for any $x \geq C$,
$$ | \widehat{u} (t,x+c_N (t-m_N (t));a)  | \leq \delta. \vspace{3pt}$$
Note that under Assumption~\ref{assumption-p2'}, since $N=1$, the uniform convergence~\eqref{eqn:CVcor0} immediately follows from the above computations.\\

Lastly, let $1 \leq k \leq N$ be a given integer. Then one has
$$
\lim_{t \rightarrow -\infty} U_k (t,x) \equiv p_k (x) \mbox{  and  }
\lim_{t \rightarrow +\infty} U_{k+1} (t,x) \equiv p_k (x) .
$$
As above, one can use Claim~\ref{claim:period-mon} to show that there exists some constant $C>0$ such that for each large time $t$:
$$ p_k(x)+\delta \geq \widehat{u} (t,x+c_k (t-m_k (t));a)\text{  for all } x\geq C.\vspace{3pt}$$
$$ p_k (x)-\delta \leq \widehat{u} (t,x+c_{k+1} (t-m_{k+1} (t));a) \leq p(x) \text{  for all } x\leq -C.$$
This ends the proof of Theorem~\ref{heavip2-CV}.


\end{document}